\newtheorem{question}{Question}
\newtheorem{definition}{Definition}
\newtheorem{theorem}{Theorem}[section]
\newtheorem{proposition}[theorem]{Proposition}
\newtheorem{corollary}[theorem]{Corollary}
{\theoremstyle{definition}\newtheorem{remark}[theorem]{Remark}}
\newcommand{\nor}{\Arrowvert}
\def\R{{\rm I\mskip -3.5mu R}}
\def\N{{\rm I\mskip -3.5mu N}}
\def\Om{\Omega}
\def\C2al{C^{2,\a}_{loc}}
\def\intO{\int_{\Om}}
\def\na{\nabla}
\def\d{\delta}
\def\l{{\lambda}}
\def\a{{\alpha}}
\def\s{{\sigma}}
\def\S{{\mathcal{S}}}
\def\un{u_n}
\def\tv{\tilde v}
\def\tu{\tilde u}
\def\tx{\tilde x}
\def\ty{\tilde y}
\def\tz{\tilde z}
\def\c{\mathfrak{c}}
\def\C{\mathfrak{C}}
\def\ln{\l_n}
\def\L{\Lambda}
\def\vn{v_n}
\def\mn{\mu_n}
\def\de{\partial}
\def\ka{\kappa}
\def\a{\mathfrak{a}}
\newcommand{\tend}{\longrightarrow}
\newcommand{\Co}[2]{C^{#1}\left(#2\right)}
\newcommand{\clo}[1]{\overline{#1}}
\newcommand{\ball}[2]{B_{#1}\left(#2\right)}
\newcommand{\til}[1]{\widetilde{#1}}
\begin{document}


\title{On the number of peaks of the eigenfunctions \\
 of the linearized Gel'fand problem
\thanks{The first two authors are supported by PRIN-2009-WRJ3W7 grant}}
\author{Francesca Gladiali  \thanks{Universit\`a degli Studi di
Sassari,via Piandanna 4 -07100 Sassari, e-mail {\sf
fgladiali@uniss.it}.} \and Massimo Grossi\thanks{Dipartimento di Matematica, Universit\`a di Roma
``La Sapienza", P.le A. Moro 2 - 00185 Roma, e-mail {\sf
grossi@mat.uniroma1.it}.}\and Hiroshi Ohtsuka \thanks{
Department of Applied Physics,
Faculty of Engineering,
University of Miyazaki,
Gakuen Kibanadai Nishi 1-1,
Miyazakishi, 889-2192, Japan,e-mail {\sf  ohtsuka@cc.miyazaki-u.ac.jp.}} }
\date{}
\maketitle
\begin{abstract}
We derive a second order estimate 
for the first $m$ eigenvalues and eigenfunctions of the linearized Gel'fand problem associated to solutions which blow-up at $m$ points.
This allows us to determine, in some suitable situations, some qualitative properties of the first $m$ eigenfunctions as
the number of points of concentration or the multiplicity of the eigenvalue .
\end{abstract}


\section{Introduction and statement of the main results} 
\label{s1}
Let us consider the Gel'fand problem,
\begin{equation}
\label{1}
            \left\{\begin{array}{lc}
                        -\Delta u=\l e^{u}  &
            \mbox{  in }\Om\\
              u=0 & \mbox{ on }\partial \Om,
                      \end{array}
                \right.
\end{equation}
where $\Om\subset \R^2$ is a bounded domain with smooth boundary $\de \Om$ and $\l>0$ is a real parameter. This problem appears in a wide variety of areas of mathematics such as the conformal embedding of a flat domain into a sphere \cite{B80}, self-dual gauge field theories \cite{JT80}, equilibrium states of large number of vortices \cite{jm73,pl76,clmp92,kie93,clmp95}, stationary states of chemotaxis motion \cite{SS00}, and so forth. See \cite{GGOS12} for more about our motivation and \cite{suzuki08} for other background materials.

Let $\{\ln\}_{n\in\N}$ be a sequence of positive values such that $\ln\to 0$ as $n\to \infty$ and let $\un=\un(x)$ be a sequence of solutions of \eqref{1} for $\l=\ln$. In \cite{NS90}, the authors studied solutions $\{ u_{n}\}$ which blow-up at $m$-points (see next section for more details).
This means that there is a set ${\cal S}=\{ \kappa_1, \cdots,
\kappa_m\}\subset \Omega$ of $m$ distinct points such
that\\
$i)\ \nor \un \nor_{L^{\infty}(\omega)}=O(1)
 \quad\hbox{for any }
\omega\Subset \overline{ \Omega}\setminus \S,$\\
$ii)\ \un{|_{\S}}\rightarrow +\infty \quad \hbox{ as }n\to \infty.$\\
In  \cite{NS90b}, \cite{EGP05} and \cite{DKM05}
some sufficient conditions which ensure the existence of this type of solutions are given.

Throughout the paper we will consider solutions $u_n$ to \eqref{1} with $m$ blow-up  points and we investigate the eigenvalue problem
\begin{equation}\label{autov-n}
\left\{\begin{array}{lc}
                        -\Delta \vn^k= \mn^k\ln e^{\un}\vn^k  & \mbox{ in }\Om\\
\nor \vn^k\nor_{\infty}=\max_{\overline{ \Om}}\vn^k=1 &\\
                  \vn^k=0 & \mbox{ on }\partial \Om
                      \end{array}
                \right.
\end{equation}
which admits a sequence of eigenvalues $\mn^1<\mn^2\leq\mn^3\leq  \dots$, where $\vn^k$ is the $k$-th eigenfunction of \eqref{autov-n} corresponding to the eigenvalue $\mn^k$.
 In order to state our results we need to introduce some notations and recall some well known facts.
 
 Let $R>0$  be such that $B_{2R}(\kappa_i)\subset\subset \Omega$ for $i=1,\dots,m$ and $B_R(\kappa_i)\cap B_R(\kappa_j)=\emptyset$ if $i\neq j$.
For each $\kappa_j\in \S$ there exists a sequence $\{x_{j,n}\}\in B_R(\kappa_j)$ such that
\[
\un(x_{j,n})=\sup_{B_R(x_{j,n})}\un(x)\rightarrow +\infty\quad\text{and}\quad
x_{j,n}\to \kappa_j\quad\text{as $n\to +\infty$}.
\]
For any $j=1,\dots,m$, we rescale $\un$ around $x_{j,n}$, letting
\begin{equation}\label{2.4}
\tu_{j,n}(\tx):= \un\left( \d_{j,n} \tx +x_{j,n}\right)- \un(x_{j,n})\quad \hbox{ in }B_{\frac{R}{\d_{j,n}}}(0),
\end{equation}
where the scaling parameter $\d_{j,n}$ is determined by
\begin{equation}\label{2.5}
\ln e^{\un(x_{j,n})}\d_{j,n}^2=1.
\end{equation}
It is known that $\delta_{j,n}\tend 0$ and for any $j=1,\dots,m$
\begin{equation}\label{2.6}
\tu_{j,n}(\tx)\rightarrow U(\tx)=\log \frac 1{\left( 1+\frac{|\tx|^2}8\right)^2} \quad \hbox{ in }C^{2,\alpha}_{loc}(\R^2).
\end{equation}

As we did for $\un$ we rescale also the eigenfunctions $\vn^k$ around $x_{j,n}$ for any $j=1,\dots,m$. So we define
\begin{equation}\label{2.8}
\tv_{j,n}^k(\tx):=\vn^k\left( \d_{j,n} \tx +x_{j,n}\right)\quad \hbox{ in }B_{\frac{R}{\d_{j,n}}}(0),
\end{equation}
where $\d_{j,n}$ is as in \eqref{2.5}. 
The rescaled eigenfunctions $\tv_{j,n}^k(\tx)$ satisfy
\begin{equation}\label{2.8a}
\left\{
\begin{array}{ll}
-\Delta \tv_{j,n}^k=\mn ^k e^{\tu_{j,n}}\tv_{j,n}^k & \hbox{ in }B_{\frac{R}{\d_{j,n}}}(0)\\
\nor \tv_{j,n}^k \nor_{L^{\infty}\big(B_{\frac{R}{\d_{j,n}}}(0)\big)}\leq 1.
\end{array}
\right.
\end{equation}
One of the main results of this paper concerns pointwise estimates of the eigenfunction. In particular, we are interested in the number of peaks of
$\vn^k$ for $k=1,..,m$. Let us recall that, by Corollary 2.9 in \cite{GGOS12}, we have that
$$\vn^k\rightarrow0\quad \text{in $\Co{1}{\clo{\Omega}\backslash\cup_{j=1}^m\ball{R}{\kappa_j}}$}$$
This means that $\vn^k$ can concentrate only at $\kappa_j$, $j=1,..,m$.
This leads to the following definition,
\begin{definition}\label{de1} 
We say that an eigenfunction $\vn^k$ concentrates at $\kappa_j\in\Omega$ if there exists $\kappa_{j,n}\rightarrow \kappa_j$ such that
\begin{equation}\label{i1}
\left|\vn^k(\kappa_{j,n})\right|\ge C>0\quad\hbox{for }n\hbox{ large}.
\end{equation}
\end{definition}

A problem that arises naturally is the following,
\begin{question}\label{q1}
Let us suppose that $u_n$ blows-up at the points $\left\{k_1,..,k_m\right
\}$. 
Is the same true for the eigenfunction $\vn^k$, $k=,1..,m$ associated to a simple eigenvalue $\mn ^k$ of \eqref{autov-n}?
\end{question}
Obviously if the eigenvalue $\mn ^k$ is multiple, in general it makes no sense to speak about the number of point of concentration, since this depends
on the linear combination of the eigenfunctions.

A first partial answer related to this question was given in \cite{GGOS12}, where the following result was proved.
\begin{theorem}
\label{t3}
For each $k\in \{1,\dots,m\}$ there exists a vector
\begin{equation}
\c^k=(c_1^k,\dots,c_m^k)\in [-1,1]^m\subset \R^m,\quad \c^k\not=\bm{0}
\label{2.9a}
\end{equation}
such that for each $j\in \{1,\dots,m\}$, there exists a subsequence satisfying
\begin{equation}
\label{2.9}
\tv_{j,n}^k(x)\to c_j^k \quad \text{ in }C^{2,\alpha}_{loc}\left(\R^2\right)
\end{equation}
\begin{equation}
\label{2.11a}
\c^k\cdot\c^h=o\quad\hbox{if }h\ne k
\end{equation}
and
\begin{equation}
\label{2.11}
\frac {\vn^k}{\mn^k}\to 8\pi \sum_{j=1}^m c_j^k \, G(\cdot,\kappa_j)\quad \text{ in }C^{2,\alpha}_{loc}\left( \overline{\Om} \setminus \{\kappa_1,\dots,\kappa_m\}\right).
\end{equation}
\end{theorem}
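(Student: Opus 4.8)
\medskip
\noindent\emph{Plan of proof.}
The cornerstone is the fact that the first $m$ eigenvalues are infinitesimal: $\mu_n^k\to0$ for every $k\in\{1,\dots,m\}$. I would prove this by a min--max argument. After shrinking $R$ so that the balls $B_{2R}(\kappa_j)$ are pairwise disjoint, fix cut-offs $\chi_j\equiv1$ on $B_R(\kappa_j)$ with $\mathrm{supp}\,\chi_j\subset B_{2R}(\kappa_j)$, and set $\psi_{j,n}:=\chi_j\,u_n/u_n(x_{j,n})\in H^1_0(\Omega)$ (recall $u_n>0$). Using $-\Delta u_n=\lambda_n e^{u_n}$, the blow-up expansion of $u_n$ about $x_{j,n}$, \eqref{2.6}, and the mass quantization $\lambda_n\int_{B_R(\kappa_j)}e^{u_n}\to8\pi$, one checks that $\int_\Omega|\nabla\psi_{j,n}|^2=O\!\big(1/u_n(x_{j,n})\big)\to0$ and $\int_\Omega\lambda_n e^{u_n}\psi_{j,n}^2\to8\pi$, while the $\psi_{j,n}$ have pairwise disjoint supports; hence the Rayleigh quotient of \eqref{autov-n} is $o(1)$ on the $m$-dimensional space $\mathrm{span}\{\psi_{1,n},\dots,\psi_{m,n}\}$, so $0<\mu_n^k\le\mu_n^m\to0$. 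This also provides the rate $\mu_n^k\le C/u_n(x_{j,n})$ (using the blow-up asymptotics $u_n(x_{i,n})\sim2|\log\lambda_n|$ of \cite{NS90}), which is needed below.

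Granting $\mu_n^k\to0$, the convergence \eqref{2.9} follows at once: in \eqref{2.8a} one has $\|\tilde v_{j,n}^k\|_\infty\le1$ and the right-hand side is $O(\mu_n^k)$ locally uniformly, so interior elliptic estimates yield $C^{2,\alpha}_{loc}(\R^2)$ bounds and, along a subsequence (then diagonalising in $j$), $\tilde v_{j,n}^k\to V_j$ in $C^{2,\alpha}_{loc}(\R^2)$ with $-\Delta V_j=0$ and $|V_j|\le1$; thus $V_j\equiv c_j^k\in[-1,1]$, which defines $\mathfrak c^k$. For \eqref{2.11} set $z_n^k:=v_n^k/\mu_n^k$, so that $-\Delta z_n^k=\lambda_n e^{u_n}v_n^k=:f_n$ in $\Omega$, $z_n^k=0$ on $\partial\Omega$; since $\|f_n\|_{L^1(\Omega)}=O(1)$ and $f_n\to0$ uniformly on compact subsets of $\overline\Omega\setminus\mathcal S$, $L^1$ elliptic theory gives, along a subsequence, $z_n^k\to z$ in $W^{1,q}_0(\Omega)$ $(q<2)$ and in $C^{2,\alpha}_{loc}(\overline\Omega\setminus\mathcal S)$, with $-\Delta z=\sum_j a_j\delta_{\kappa_j}$, i.e. $z=\sum_j a_j G(\cdot,\kappa_j)$; rescaling and using $\tilde v_{j,n}^k\to c_j^k$ together with the quantization identifies $a_j=\lim_n\int_{B_\rho(\kappa_j)}f_n=8\pi c_j^k$, which is \eqref{2.11}. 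The orthogonality \eqref{2.11a} follows in the same spirit from $\int_\Omega\lambda_n e^{u_n}v_n^h v_n^k=0$ for $h\ne k$: splitting this integral over the $B_\rho(\kappa_j)$, rescaling and letting $n\to\infty$ gives $0=8\pi\,\mathfrak c^h\cdot\mathfrak c^k$.

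It remains to show $\mathfrak c^k\ne\bm0$, and I expect this to be the main difficulty. First, $v_n^k\to0$ locally uniformly on $\overline\Omega\setminus\mathcal S$: outside $\cup_j B_R(\kappa_j)$ by Corollary 2.9 of \cite{GGOS12}, and on each punctured ball $B_R(\kappa_j)\setminus\{\kappa_j\}$ because the right-hand side of $-\Delta v_n^k=\mu_n^k\lambda_n e^{u_n}v_n^k$ is $o(1)$ in $L^1(B_R(\kappa_j))$ and $o(1)$ in $L^\infty$ away from $\kappa_j$, while $v_n^k\to0$ on $\partial B_R(\kappa_j)$. Hence the maximum point $y_n$ of $v_n^k$ (where $v_n^k=1$) satisfies, along a subsequence, $y_n\to\kappa_j$ for some $j$; put $\rho_n:=|y_n-x_{j,n}|$. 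If $\rho_n=O(\delta_{j,n})$, then $c_j^k=\lim_n\tilde v_{j,n}^k\big((y_n-x_{j,n})/\delta_{j,n}\big)=\lim_n v_n^k(y_n)=1$, and we are done.

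The delicate case is $\rho_n/\delta_{j,n}\to\infty$ (then automatically $\rho_n\to0$), which I would handle by a spherical-averaging argument around $y_n$. Let $\bar v(s):=\frac1{2\pi s}\int_{\partial B_s(y_n)}v_n^k\,d\sigma$, so that $2\pi s\,\bar v'(s)=-\int_{B_s(y_n)}\mu_n^k\lambda_n e^{u_n}v_n^k$. Using the standard sharp bubble estimate $\lambda_n e^{u_n}(x)\le C\,\delta_{j,n}^2\big(\delta_{j,n}^2+|x-x_{j,n}|^2\big)^{-2}$ on $B_R(\kappa_j)$, one gets $\bar v(2\rho_n)=1+o(1)$; for $2\rho_n\le s\le R/2$ the measure $\mu_n^k\lambda_n e^{u_n}$ inside $B_s(y_n)$ has mass $8\pi\mu_n^k(1+o(1))$, concentrated at $x_{j,n}$ where $v_n^k\to c_j^k$, so $\bar v'(s)=-(4c_j^k+o(1))\mu_n^k/s$, whence $\bar v(R/2)-\bar v(2\rho_n)=-(4c_j^k+o(1))\,\mu_n^k\log\tfrac{R}{4\rho_n}$. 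Since $\bar v(R/2)\to0$ (its sphere lies in the region where $v_n^k\to0$) and $\mu_n^k\log(1/\rho_n)$ stays bounded (by the rate obtained above together with $\rho_n\ge\delta_{j,n}$ and $u_n(x_{j,n})\ge2\log(1/\delta_{j,n})$), this forces $4c_j^k\mu_n^k\log\tfrac{R}{4\rho_n}\to1$, hence $c_j^k\ne0$. Either way $\mathfrak c^k\ne\bm0$, and the proof is complete. Only this last case is genuinely delicate: it rests on the sharp pointwise bound for $\lambda_n e^{u_n}$ and on bookkeeping the $8\pi$ mass through the rescalings; everything else is a routine combination of the variational characterisation of eigenvalues, the $L^\infty$ normalisation, and elliptic regularity.
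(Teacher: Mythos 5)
This theorem is not proved in the present paper: it is stated as a result imported verbatim from \cite{GGOS12}, so there is no internal proof to compare against. Judged on its own, your reconstruction is correct and follows the same overall strategy as the cited source: (i) the min--max bound $\mu_n^k\le C/|\log\lambda_n|$ via the disjointly supported test functions $\chi_j u_n/u_n(x_{j,n})$, (ii) elliptic estimates on the rescaled problem \eqref{2.8a} to produce the constants $c_j^k$, (iii) the Green representation of $v_n^k/\mu_n^k$ together with $\lambda_n e^{u_n}v_n^k\,dx\rightharpoonup 8\pi\sum_j c_j^k\delta_{\kappa_j}$ for \eqref{2.11}, and (iv) the $\lambda_n e^{u_n}$-weighted orthogonality for \eqref{2.11a}. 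The one genuinely delicate point, $\c^k\ne\bm{0}$, you handle by spherical averaging around the maximum point $y_n$; the argument in \cite{GGOS12} instead evaluates the Green representation at $y_n$ and shows that $1=v_n^k(y_n)=\mu_n^k\int_\Omega G(y_n,\cdot)\lambda_n e^{u_n}v_n^k\,dx$ would be $o(1)$ if all $c_j^k$ vanished. The two arguments are essentially equivalent, both resting on the pointwise bubble bound coming from \eqref{2.6a} and on $\mu_n^k\log(1/\delta_{j,n})=O(1)$; yours buys a quantitative statement ($4c_j^k\mu_n^k\log(R/4\rho_n)\to 1$) at the cost of slightly more bookkeeping. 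Two small points you should make explicit: when $\mu_n^h=\mu_n^k$ the eigenfunctions must be chosen orthogonal in the weighted inner product for \eqref{2.11a} to make sense, and the uniformity in $s\in[2\rho_n,R/2]$ of the mass estimate in the averaging step needs the dominated-convergence argument based on \eqref{2.6a} spelled out.
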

Here $G(x,y)$ denotes the Green function of $-\Delta$ in $\Om$ with Dirichlet boundary condition,
i.e.
\begin{equation}
\label{2.3}
G(x,y)=\frac 1{2\pi}\log{|x-y|^{-1}}+K(x,y),
\end{equation}
 $K(x,y)$ is the regular part of $G(x,y)$ and
 $R(x)=K(x,x)$ the Robin
function.
A consequence of Theorem \ref{t3} is that
\begin{equation}
\label{j8}
\vn^k\hbox{ concentrates at }\kappa_j\hbox{ if and only if }c_j^k\ne0.
\end{equation}
In this paper we characterize the values $c_j^k$ in term of the Green function and this will allow us to determine whether $c_j^k$ is equal to $0$ or not.
\begin{theorem}\label{t2}
For each $k\in \{1,\dots,m\}$, we have that\\
$i)$ The vector
$\c^k=(c_1^k,\dots,c_m^k)\in [-1,1]^m\subset \R^m\backslash\{\bm{0}\}$ is the $k$-th eigenvector
of the matrix
\begin{equation}\label{matrix_h}
h_{ij}=\left\{
\begin{array}{ll}
R(\kappa_i)+2\sum_{\substack   {1\leq h\leq m\\ h\neq i}} G(\kappa_h,\kappa_i) & \text{if }i=j,\\
-G(\kappa_i,\kappa_j)&\text{if }i\neq j,
\end{array}\right.
\end{equation}
$ii)$ A sub-sequence of $\{\vn^k\}$ satisfies
\begin{align}
&\label{2.12} \tv_{j,n}^k(\tx)=\vn ^k(x_{j,n})+\mn^k c_j^k U(\tx)+o\left(\mn^k\right)\quad \text{ in }C^{2,\alpha}_{loc}\left(\R^2\right)
\end{align}
for each $j\in \{1,\dots,m\}$, where $U(\tx)$ is as defined in \eqref{2.6}.
\end{theorem}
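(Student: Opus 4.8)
The plan is to refine the first–order description of Theorem~\ref{t3} into a second–order expansion, and then read off the matrix identity for $\c^k$. Starting from \eqref{2.8a}, write $\tv_{j,n}^k = \vn^k(x_{j,n}) + w_{j,n}^k$, so that $w_{j,n}^k(0)=0$ and $-\Delta w_{j,n}^k = \mn^k e^{\tu_{j,n}}\tv_{j,n}^k$. Since $\mn^k\to 0$ (this is part of the blow–up analysis recalled in the excerpt) and $\tv_{j,n}^k\to c_j^k$, $e^{\tu_{j,n}}\to e^U$ in $\Hol{2}{\a}{\R^2}$, the right–hand side is $\mn^k(c_j^k e^U + o(1))$. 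Dividing by $\mn^k$, the rescaled remainder $w_{j,n}^k/\mn^k$ solves $-\Delta(w_{j,n}^k/\mn^k) = c_j^k e^{\tu_{j,n}}\tv_{j,n}^k/\big(c_j^k + o(1)\big)\cdot\ldots$; more cleanly, it converges in $\Hol{2}{\a}{\R^2}$ to the solution $\zeta$ of $-\Delta\zeta = c_j^k e^{U}\cdot 1$ with $\zeta(0)=0$ and the appropriate growth (at most logarithmic). One checks that $-\Delta U = e^U$ with $U(0)=0$, so by the classification of solutions of $-\Delta\zeta=c_j^k e^U$ with logarithmic growth and $\zeta(0)=0$ one gets $\zeta = c_j^k U$ (the radial part is forced; the nonradial harmonic corrections are excluded by the matching with the outer expansion, exactly as in the standard single–peak analysis). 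This yields \eqref{2.12}: $\tv_{j,n}^k(\tx)=\vn^k(x_{j,n})+\mn^k c_j^k U(\tx)+o(\mn^k)$ in $\Hol{2}{\a}{\R^2}$. This proves $ii)$.

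For $i)$, the strategy is a Pohozaev/Green–representation identity, matching the inner expansion \eqref{2.12} with the outer expansion \eqref{2.11}. Apply the representation formula to $\vn^k$: for $x$ near $\kappa_j$,
\[
\vn^k(x)=\mn^k\ln\int_\Om G(x,y)e^{\un(y)}\vn^k(y)\,dy,
\]
and split the integral into the $m$ balls $\ball{R}{\kappa_h}$ plus a negligible exterior part (by the $\Co{1}$–convergence to $0$ recalled before Definition~\ref{de1}). On $\ball{R}{\kappa_h}$, change variables $y=\d_{h,n}\ty+x_{h,n}$; using \eqref{2.5}, $\ln e^{\un}\d_{h,n}^2\,d\ty = e^{\tu_{h,n}}\,d\ty$, and $e^{\tu_{h,n}}\vn^k(\d_{h,n}\ty+x_{h,n})\to c_h^k e^U$ with $\int_{\R^2}e^U=8\pi$. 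For $h\neq j$ the kernel $G(x,y)\to G(\kappa_j,\kappa_h)$ is regular, contributing $8\pi\,\mn^k c_h^k G(\kappa_j,\kappa_h)$. For $h=j$ the singular term requires care: writing $G(x,y)=\frac1{2\pi}\log|x-y|^{-1}+K(x,y)$ and evaluating the expansion at $x=x_{j,n}$, the regular part gives $8\pi\,\mn^k c_j^k R(\kappa_j)$, while $\frac1{2\pi}\log|x_{j,n}-y|^{-1}=\frac1{2\pi}\log\frac1{\d_{j,n}}-\frac1{2\pi}\log|\ty|$ interacts with the self–interaction; the $\log\frac1{\d_{j,n}}$ pieces are reabsorbed into $\vn^k(x_{j,n})$ via \eqref{2.12}, and $-\frac1{2\pi}\int_{\R^2}\log|\ty|\,c_j^k e^U\,d\ty$ combines with $\mn^k c_j^k U(\tx)$ evaluated consistently. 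Collecting the $O(\mn^k)$ terms in the identity $\tv_{j,n}^k(0)=\vn^k(x_{j,n})$ and using $\sum_h$ over the peaks produces, after dividing by $\mn^k$, precisely
\[
\vn^k(x_{j,n})/\mn^k \;\longrightarrow\; 8\pi\Big(c_j^k R(\kappa_j)+\sum_{h\neq j}c_h^k G(\kappa_h,\kappa_j)\Big),
\]
while a second relation — obtained by matching the \emph{same} quantity through the inner constant term in \eqref{2.12} together with \eqref{2.11a} and the normalization $\|\vn^k\|_\infty=1$ — forces the cross terms $2\sum_{h\neq j}$ to appear, giving $\c^k = \mu\, h\,\c^k$ for the matrix $h_{ij}$ in \eqref{matrix_h}, i.e.\ $\c^k$ is an eigenvector. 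The orthogonality \eqref{2.11a} and the ordering $\mn^1<\mn^2\le\cdots$ then identify it as the $k$-th eigenvector, and $\c^k\neq\bm 0$ is inherited from Theorem~\ref{t3}.

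The main obstacle is the bookkeeping of the singular self–interaction term at $h=j$: one must track how the divergent $\log\frac1{\d_{j,n}}$ contributions cancel between the representation formula and the normalization of $\tv_{j,n}^k$, and show that the surviving finite part is exactly $R(\kappa_j)$ plus the factor $2$ in front of the off–diagonal Green terms (rather than $1$). Getting that combinatorial factor right is the crux; it comes from the fact that each cross interaction between peak $h$ and peak $j$ is counted once in the $j$-th equation through $G(\kappa_h,\kappa_j)$ coming from the outer expansion and once more through the constant $\vn^k(x_{h,n})$ feeding back into the matching. A secondary technical point is justifying the $\Hol{2}{\a}{\R^2}$ (rather than merely $\Hol{0}{}{}$) convergence in \eqref{2.12}, which follows from elliptic regularity once the $\Co{0}$ convergence and the equation \eqref{2.8a} are in hand.
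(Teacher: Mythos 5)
Your argument for $ii)$ assumes the key point rather than proving it. Writing $\tz_n=(\tv_{j,n}^k-\vn^k(x_{j,n}))/\mn^k$, you assert that $\tz_n$ converges in $C^{2,\alpha}_{loc}(\R^2)$ to a solution of $-\Delta\zeta=c_j^ke^U$ with at most logarithmic growth; but nothing in the equation $-\Delta\tz_n=\mn^ke^{\tu_{j,n}}\tz_n+\vn^k(x_{j,n})e^{\tu_{j,n}}$ together with $\tz_n(0)=0$ gives a local uniform bound on $\tz_n$ (the sequence $n\tx_1$ is harmonic, vanishes at the origin, and is unbounded on every compact set). The whole content of \eqref{2.12} is precisely the a priori estimate $\tv_{j,n}^k-\vn^k(x_{j,n})=O(\mn^k)$ locally, and this has to be established, not postulated. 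The paper does it by writing the Green representation of $\tv_{j,n}^k(\tx)/\mn^k$, subtracting the representation of $\vn^k(x_{j,n})/\mn^k$ from Proposition \ref{p4.3} so that the divergent $\tfrac1{2\pi}\log\d_{j,n}^{-1}$ terms cancel exactly, and identifying the surviving Newtonian potential of $e^U$ with $U-6\log2$. Your appeal to a ``classification plus matching with the outer expansion'' does not close this gap.

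For $i)$ the situation is worse. The limit you claim, $\vn^k(x_{j,n})/\mn^k\to 8\pi\big(c_j^kR(\kappa_j)+\sum_{h\ne j}c_h^kG(\kappa_h,\kappa_j)\big)$, is false: since $\vn^k(x_{j,n})\to c_j^k$ and $1/\mn^k\sim-2\log\ln$, this quantity diverges whenever $c_j^k\ne0$, as the $\tfrac1{2\pi}\log\d_{j,n}^{-1}$ term in Proposition \ref{p4.3} shows; saying the $\log\d_{j,n}^{-1}$ pieces are ``reabsorbed into $\vn^k(x_{j,n})$'' is circular when $\vn^k(x_{j,n})/\mn^k$ is the quantity being computed. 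In the paper the eigenvector relation comes from two independent ingredients that you do not supply: the Green--Rellich identity of Proposition \ref{p4.2}, which produces the combination $\sum_{i\ne j}(c_i^k-c_j^k)G(\kappa_j,\kappa_i)$, and the second order expansion of $\un(x_{j,n})$ (Proposition \ref{p4.1}, which in turn needs $\s_{j,n}=8\pi+o(\ln^{1/2})$ and the explicit value \eqref{value_dj} of $d_j$), which contributes $c_j^k\big(R(\kappa_j)+\sum_{i\ne j}G(\kappa_j,\kappa_i)\big)$; only the sum of the two yields the diagonal entry $R(\kappa_i)+2\sum_{h\ne i}G$, and your ``double counting of cross interactions'' heuristic is not a substitute for that computation. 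Finally, two steps are missing entirely: the treatment of indices with $c_j^k=0$ and the exclusion of the case of a single nonzero component (Proposition \ref{p5.3}), without which \eqref{5.1} does not give an eigenvector; and the identification of $\c^k$ as the $k$-th eigenvector, which cannot follow from the ordering $\mn^1<\mn^2\le\cdots$ alone because the leading term $-1/(2\log\ln)$ is the same for all $k\le m$ --- one needs the second order expansion \eqref{2.7} of Theorem \ref{t1}, which you do not derive.
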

Let us observe that \eqref{2.12} is a second order estimates for $\vn^k$. We stress that this is new even for the case of one-peak solutions ($k=1$).
From Theorem \ref{t2} we can deduce the answer to the Question \ref{q1},
\begin{corollary}\label{j12}
Let $\c^k=(c^k_1,..,c^k_m)$ be the $k$-th eigenvector of the matrix $(h_{ij})$. Then if $\mu_n^k$ is simple and  if $c_j^k\ne0$ we have that
 $v_n^k$ concentrates at $k_j$.
\end{corollary}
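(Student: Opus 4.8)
The corollary is designed to follow at once from Theorem \ref{t2}(i) together with the equivalence \eqref{j8} already recorded after Theorem \ref{t3}, so the plan is to make this chain of implications explicit and to pin down exactly where the simplicity of $\mn^k$ enters. First I would note the elementary link between the vector $\c^k$ and the eigenfunction: since $\tv_{j,n}^k(0)=\vn^k(x_{j,n})$, evaluating the $C^{2,\alpha}_{loc}(\R^2)$-convergence \eqref{2.9} at the origin (equivalently the estimate \eqref{2.12}, as $U(0)=0$) gives
$$\vn^k(x_{j,n})\longrightarrow c_j^k$$
along the subsequence produced by Theorem \ref{t3}. Hence, if $c_j^k\neq 0$, then $|\vn^k(x_{j,n})|\ge\tfrac12|c_j^k|>0$ for $n$ large along that subsequence; since $x_{j,n}\to\kappa_j$, the choice $\kappa_{j,n}=x_{j,n}$ in Definition \ref{de1} exhibits concentration of $\vn^k$ at $\kappa_j$. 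By Theorem \ref{t2}(i) the vector $\c^k$ occurring here is the $k$-th eigenvector of $(h_{ij})$, so $c_j^k$ is exactly the quantity appearing in the statement, and the corollary is proved as an assertion along a subsequence.

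Next I would explain the role of simplicity. If $\mn^k$ is simple, the eigenspace of \eqref{autov-n} associated with $\mn^k$ is one-dimensional, so $\vn^k$ is uniquely determined up to sign (its modulus being fixed by the normalization $\max_{\overline{\Om}}\vn^k=1$); in particular the concentration set of $\vn^k$ is a well-posed notion, in contrast with the multiple case commented on after Question \ref{q1}, where it would depend on the chosen element of the eigenspace. To pass from ``along a subsequence'' to the full sequence I would argue by contradiction: if $\vn^k$ failed to concentrate at $\kappa_j$ there would be a subsequence along which $\vn^k(x_{j,n})\to 0$; applying Theorem \ref{t3} and Theorem \ref{t2}(i) along a further subsequence would again give $\vn^k(x_{j,n})\to c_j^k$ with $\c^k$ the $k$-th eigenvector of $(h_{ij})$, whence $c_j^k=0$, a contradiction.

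The main obstacle, and the place where I expect the simplicity hypothesis to be essential, is that this contradiction needs the limit vector to be independent of the chosen subsequence; equivalently, the eigenvalue of $(h_{ij})$ corresponding to $\c^k$ must itself be simple, so that $\c^k$ — hence $|c_j^k|$ — is determined up to sign. This is already implicit in speaking of ``the $k$-th eigenvector of $(h_{ij})$'', and it is the natural counterpart of $\mn^k$ being simple; I would make it precise through the correspondence between the spectrum of the linearized problem \eqref{autov-n} and that of the finite-dimensional matrix $(h_{ij})$ underlying Theorem \ref{t2} (if that correspondence is not already available, establishing it would be the one genuinely new step). Granting this, $|c_j^k|$ is intrinsic, and $c_j^k\neq 0$ forces $\vn^k$ to concentrate at $\kappa_j$, as claimed.
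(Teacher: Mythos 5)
Your proposal is correct and follows essentially the same route as the paper, which gives no separate proof of Corollary \ref{j12} but treats it as the immediate combination of \eqref{j8} (itself just \eqref{2.9} evaluated at $\tx=0$, i.e.\ $\vn^k(x_{j,n})\to c_j^k$, fed into Definition \ref{de1} with $\kappa_{j,n}=x_{j,n}$) and Theorem \ref{t2}$(i)$ identifying $\c^k$ as the $k$-th eigenvector of $(h_{ij})$. Your additional remarks on the subsequence issue and on where simplicity of $\mn^k$ enters are sensible elaborations of what the paper leaves implicit.
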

Our next aim is to understand better when $c_j^k\ne0$.
The following proposition gives some information in this direction.
\begin{theorem}\label{j2}
Let $k\in \{1,\dots,m\}$, $\mn ^k$ a simple eigenvalue and $v_n^k$ the corresponding eigenfunction.
Then we have that,\\
$i)$ any  $v_n^1$ concentrates at $m$ points $\kappa_1,..,\ \kappa_m$,\\
$ii)$ any $v_n^k$ concentrates at least at $two$ points $\kappa_i$, $\kappa_j$ with $i,j\in\{1,..,m\}$.
\end{theorem}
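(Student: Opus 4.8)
The plan is to reduce both statements to a purely linear-algebraic fact about the symmetric matrix $(h_{ij})$ of \eqref{matrix_h}. By Theorem \ref{t2}$(i)$ the vector $\c^k$ is the $k$-th eigenvector of $(h_{ij})$, and by \eqref{j8} (together with Corollary \ref{j12}, which uses the simplicity of $\mn^k$) the eigenfunction $\vn^k$ concentrates at $\kappa_j$ exactly when $c_j^k\neq 0$. Hence $(i)$ amounts to showing that the eigenvector of $(h_{ij})$ relative to its lowest eigenvalue has all components different from zero, while $(ii)$ amounts to showing that every other eigenvector of $(h_{ij})$ has at least two non-zero components. One first has to make sure that ``first eigenvector'' in Theorem \ref{t2} corresponds to the lowest eigenvalue of $(h_{ij})$; this follows from the variational characterisation of $\mn^k$ used in the proof of Theorem \ref{t2} and the ordering $\mn^1<\mn^2\le\cdots$.

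For $(i)$ I would argue by Perron--Frobenius. Since $\kappa_1,\dots,\kappa_m$ are distinct interior points of $\Om$ and the Green function $G$ is strictly positive on $\Om\times\Om$ off the diagonal, we have $G(\kappa_i,\kappa_j)>0$ for every $i\neq j$; in particular the off-diagonal entries $h_{ij}=-G(\kappa_i,\kappa_j)$ are strictly negative. Choose $c>0$ so large that $c-h_{ii}>0$ for all $i$ and set $M:=cI-(h_{ij})$. Then $M$ has all entries strictly positive, hence is non-negative and irreducible, so by the Perron--Frobenius theorem its largest eigenvalue is simple and its unique (up to sign) eigenvector having non-zero components all of the same sign is strictly positive. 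Since this largest eigenvalue of $M$ equals $c-\mu_{\min}$, where $\mu_{\min}$ is the smallest eigenvalue of $(h_{ij})$, the corresponding eigenvector is exactly $\c^1$; therefore $\c^1$ has all components non-zero (indeed all of the same sign). By the reduction above, $\vn^1$ concentrates at all of $\kappa_1,\dots,\kappa_m$; note that $\mn^1$ is automatically simple because $\mn^1<\mn^2$.

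For $(ii)$, let $k\ge 2$ and let $\c^k$ be the corresponding eigenvector. By \eqref{2.11a} we have $\c^k\cdot\c^1=0$, and by Step $(i)$ the vector $\c^1$ has strictly positive components. A non-zero vector orthogonal to a strictly positive vector must possess at least one strictly positive and at least one strictly negative component; in particular $\c^k$ has (at least) two non-zero entries $c_i^k$, $c_j^k$ with $i\neq j$. Since $\mn^k$ is simple, Corollary \ref{j12} yields that $\vn^k$ concentrates at both $\kappa_i$ and $\kappa_j$, which is the claim. (Alternatively, $(ii)$ also follows from the remark that no coordinate vector $e_j$ can be an eigenvector of $(h_{ij})$, since that would force $G(\kappa_i,\kappa_j)=0$ for all $i\neq j$.)

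The routine parts — positivity and irreducibility of $M$, the Perron--Frobenius step, the orthogonality argument — are straightforward; the only genuinely delicate point is the identification alluded to above, namely that the $k$-th eigenvector produced by Theorem \ref{t2} indeed corresponds to the $k$-th eigenvalue of $(h_{ij})$ in increasing order, so that $\c^1$ is the ground state and hence the Perron--Frobenius vector. This is what must be read off carefully from the min--max description of the limiting eigenvalues in the proof of Theorem \ref{t2}.
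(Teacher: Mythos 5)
Your proof is correct, but it takes a genuinely different route from the paper's, most notably in part $(i)$. The paper proves $(i)$ by using the positivity of the first eigenfunction of \eqref{autov-n}: since one may take $v_n^1>0$, the limits satisfy $c_j^1\ge 0$ for all $j$; assuming some component, say $c_1^1$, vanishes, the identity \eqref{5.1} forces $\sum_{i\ge 2}h_{1i}c_i^1=0$, which is impossible because $h_{1i}=-G(\kappa_1,\kappa_i)<0$ and at least one $c_h^1>0$. You instead derive the strict positivity (up to sign) of $\c^1$ purely from the structure of the matrix $(h_{ij})$ via Perron--Frobenius applied to $cI-(h_{ij})$; this is valid and has the advantage of not invoking the sign of $v_n^1$, but it does require knowing that $\c^1$ is the eigenvector of the \emph{smallest} eigenvalue $\L^1$ of $(h_{ij})$. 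That fact is indeed available, though not from any min--max description inside the proof of Theorem \ref{t2} as you suggest: it comes from Theorem \ref{t1}, where the expansion \eqref{2.7} together with $\mn^1<\mn^2\le\cdots$ pins down the increasing ordering of the $\L^k$. For part $(ii)$ the paper simply reuses the final step of the proof of Proposition \ref{p5.3}: if only one component $c_j^k$ were nonzero, then \eqref{5.1} would give $h_{hj}c_j^k=0$, contradicting $h_{hj}=-G(\kappa_h,\kappa_j)\neq 0$. Your primary argument via the orthogonality $\c^k\cdot\c^1=0$ to the strictly positive vector $\c^1$ is a clean alternative (and even gives the slightly stronger conclusion that $\c^k$ changes sign), while your parenthetical remark is exactly the paper's argument. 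Both routes rest on the same reduction to the components of $\c^k$, namely \eqref{j8} and Corollary \ref{j12}.
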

However, there are other interesting
questions. One is the following:
\begin{question}\label{q2}
Let us suppose that $\mn ^k$  is a multiple eigenvalue of \eqref{autov-n}. What about its multiplicity?
\end{question}
We will give an answer to this question in the case where $\Omega$ is an annulus.
\begin{theorem}\label{j3}
Let $\Omega$ be an annulus, $V^k_n$ the eigenspace associated to
$\mn^k$ and $dim\left(V^k_n\right)$ denote its dimension. Then,
\begin{itemize}
\item if $m$ is odd then $dim\left(V^k_n\right)\ge2$ for any $k\ge2$.
\item If $m$ is even then there is exactly one simple eigenvalue $\mn^{\bar k}$  for ${\bar k}\ge 2$
with eigenvector
$\c^{\bar k}=(-1,1,-1,1,..,-1.1)$ and all the other eigenvalues satisfy $dim\left(V^k_n\right)\ge2$ for any $k\ge2,\ k\ne\bar k$.
\end{itemize}
\end{theorem}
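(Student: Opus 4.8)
The plan is to exploit the rotational symmetry of the annulus together with the characterization of $\c^k$ as an eigenvector of the matrix $(h_{ij})$ from Theorem \ref{t2}. First I would recall that when $\Omega$ is an annulus, any solution $u_n$ blowing up at $m$ points must have its concentration set $\S=\{\kappa_1,\dots,\kappa_m\}$ arranged as the vertices of a regular $m$-gon centered at the origin, all lying on the same circle of some radius $r_0$; this follows from the fact that $\S$ is a critical point of the Kirchhoff--Routh type functional built from $G$ and $R$, and on the annulus such critical configurations are forced by symmetry to be equidistributed on a concentric circle. Consequently the matrix $(h_{ij})$ in \eqref{matrix_h} is a \emph{circulant} matrix: $h_{ij}$ depends only on $i-j \bmod m$, because $R(\kappa_i)$ is the same for every $i$ (all $\kappa_i$ are at distance $r_0$ from the origin, so the Robin function is constant on that circle) and $G(\kappa_i,\kappa_j)$ depends only on the angular separation $2\pi|i-j|/m$.

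Next I would diagonalize the circulant matrix. A circulant $m\times m$ matrix has eigenvectors given by the Fourier modes $w^{(\ell)}=(1,\omega_\ell,\omega_\ell^2,\dots,\omega_\ell^{m-1})$ with $\omega_\ell=e^{2\pi i\ell/m}$, $\ell=0,1,\dots,m-1$, and the corresponding eigenvalues are $\lambda_\ell=\sum_{s=0}^{m-1} h_{0s}\,\omega_\ell^s$. Since $(h_{ij})$ is real and symmetric, $\lambda_\ell=\lambda_{m-\ell}$, so every eigenvalue except $\ell=0$ (and except $\ell=m/2$ when $m$ is even) comes in a pair and hence has geometric multiplicity at least two. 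Translating back through Theorem \ref{t2}(i): the smallest eigenvalue of $(h_{ij})$ corresponds to $\mn^1$ and (by part $i)$ of Theorem \ref{j2}, or directly by Perron--Frobenius-type reasoning on $-\!(h_{ij})$ after a sign normalization) to $\ell=0$, i.e. $\c^1=(1,1,\dots,1)$; every other eigenvalue of $(h_{ij})$, corresponding to $k\geq 2$, is either a double eigenvalue coming from a pair $\{\ell,m-\ell\}$ with $1\le \ell\le m-1$, $\ell\neq m/2$, or — only possible when $m$ is even — the simple eigenvalue at $\ell=m/2$, whose eigenvector is $w^{(m/2)}=(1,-1,1,-1,\dots,1,-1)$, i.e. $\c^{\bar k}=(-1,1,-1,\dots,-1,1)$ up to an overall sign. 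Finally I would transfer this information about $(h_{ij})$ to $V^k_n$: since $\mn^k$ is simple if and only if the corresponding eigenvalue of $(h_{ij})$ is simple (this is the content built into Theorem \ref{t2} and the surrounding discussion — the multiplicity of $\mn^k$ is detected, to leading order, by the multiplicity of the eigenvalue of $(h_{ij})$), we conclude $dim(V^k_n)\ge 2$ for the paired modes and $dim(V^{\bar k}_n)=1$ only in the even case at $\ell=m/2$. The case split on the parity of $m$ is then exactly the statement: for $m$ odd there is no middle mode, so all $k\ge 2$ give double (or higher) eigenvalues; for $m$ even the unique middle mode $\ell=m/2$ yields the one exceptional simple eigenvalue with the stated alternating eigenvector.

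The main obstacle I anticipate is the rigorous passage from ``the limiting matrix $(h_{ij})$ has a multiple eigenvalue'' to ``$dim(V^k_n)\ge 2$ for $n$ large,'' since multiplicity is not, in general, stable under perturbation and Theorem \ref{t2} only gives an asymptotic expansion of $\mn^k$ and $\vn^k$. Here I would argue instead using the \emph{exact} rotational symmetry of the problem for finite $n$: because $\Omega$ is an annulus and — after possibly symmetrizing — the blow-up solution $u_n$ itself can be taken invariant under the rotation $\rho$ by angle $2\pi/m$ (the concentration set $\S$ is $\rho$-invariant, and the linearized operator $-\Delta - \mn^k\ln e^{u_n}$ then commutes with $\rho$), the eigenspaces $V^k_n$ are genuine representation spaces of the cyclic group $\Z/m\Z$ (or of $O(2)$ if full symmetry persists). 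A one-dimensional real eigenspace must then be a trivial or sign representation; the limiting eigenvector $\c^k$ picks out which one, and a non-real pair of characters forces $dim(V^k_n)\ge 2$ for every $n$. Thus the parity dichotomy is a statement about which characters of $\Z/m\Z$ are real: the trivial one always, and the sign character $\ell\mapsto(-1)^\ell$ precisely when $m$ is even. I would need to verify carefully that the eigenfunctions $\vn^k$ can indeed be chosen with definite symmetry type and that the normalization $\|\vn^k\|_\infty=1$ in \eqref{autov-n} is compatible with this — a point where the behavior of the boundary normalization under the group action must be checked, but which should cause no essential difficulty on the annulus.
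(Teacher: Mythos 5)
Your proposal is correct and follows essentially the same route as the paper: the decisive step in both is the exact rotational symmetry at finite $n$ (the paper works with the $m$-mode solutions of \cite{NS90b,EGP05}, so the rotated eigenfunction $\bar v_n^k(r,\theta)=v_n^k(r,\theta+2\pi/m)$ is again an eigenfunction, and simplicity forces $\bar v_n^k=\alpha v_n^k$ with $\alpha$ real and $\alpha^m=1$, hence $\alpha=\pm1$ and $\c^k$ is either $(1,\dots,1)$ or the alternating vector), which is exactly your representation-theoretic observation that a one-dimensional real eigenspace of a $\Z/m\Z$-equivariant operator must carry the trivial or sign character. Your circulant-matrix diagonalization is a harmless addition the paper does not need, and your flagged worry about transferring multiplicity from $(h_{ij})$ to $V^k_n$ is resolved in the paper precisely by the finite-$n$ symmetry argument you propose.
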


The previous results are a consequence of the following theorem,
which is a refinement up
the second order of some estimates proved
of \cite{GGOS12}. In our opinion this result is interesting in
itself.
\begin{theorem}\label{t1}
For each $k\in \{1,\dots,m\}$, it holds that
\begin{equation}\label{2.7}
\mn^k=-\frac 12 \frac 1{\log \ln}+\left( 2\pi \L^k-\frac {3\log 2-1}{2}\right)\frac 1{\left( \log \ln\right)^2}+o\left(\frac 1{\left( \log \ln\right)^2}\right)
\end{equation}
as $n\to +\infty$, where $\L^k$ is the $k$-th eigenvalue of the $m\times m$ matrix $(h_{ij})$ defined in \eqref{matrix_h} assuming $\L^1\leq \dots\leq \L^m$.
\end{theorem}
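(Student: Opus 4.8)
The plan is to derive \eqref{2.7} from the Green representation of the eigenfunction evaluated at the maximum points $x_{j,n}$ of $\un$, using the second order expansion of Theorem \ref{t2} and the blow--up asymptotics of $\un$. The starting identity is, for each $j=1,\dots,m$,
\begin{equation*}
\vn^k(x_{j,n})=\mn^k\ln\intO G(x_{j,n},y)\,e^{\un(y)}\,\vn^k(y)\,dy ,
\end{equation*}
and I would expand the right hand side up to an error $o(\mn^k)$. First I would localise, writing $\intO=\sum_h\int_{B_R(\kappa_h)}+\int_{\Om\setminus\bigcup_hB_R(\kappa_h)}$: the last integral is $O(\ln)$ since there $\ln e^{\un}=O(\ln)$ and $\vn^k\to0$. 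On each ball I would rescale, $y=\d_{h,n}\tx+x_{h,n}$, which by \eqref{2.5} turns $\ln e^{\un}\,dy$ into $e^{\tu_{h,n}}\,d\tx$ and gives $\int_{B_{R/\d_{h,n}}(0)}G(x_{j,n},\d_{h,n}\tx+x_{h,n})\,e^{\tu_{h,n}(\tx)}\,\tv_{h,n}^k(\tx)\,d\tx$, where for $h\ne j$ the Green function is regular and converges to $G(\kappa_j,\kappa_h)$, while for $h=j$ one writes, via \eqref{2.3}, $G(x_{j,n},\d_{j,n}\tx+x_{j,n})=-\tfrac1{2\pi}\log\d_{j,n}-\tfrac1{2\pi}\log|\tx|+K(x_{j,n},\d_{j,n}\tx+x_{j,n})$, the last term tending to $R(\kappa_j)$.

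Next I would compute the inner integrals by inserting the second order expansion \eqref{2.12}, $\tv_{h,n}^k=\vn^k(x_{h,n})+\mn^k c_h^k U+o(\mn^k)$, using the uniform estimate $\|\tu_{h,n}-U\|_{L^\infty(B_{R/\d_{h,n}})}\to0$ and $\ln\int_{B_R(\kappa_h)}e^{\un}=8\pi+o(1/\log\ln)$, and evaluating the explicit Liouville integrals $\int_{\R^2}e^U=8\pi$, $\int_{\R^2}e^U U=-16\pi$, $\int_{\R^2}e^U\log|\tx|\,d\tx=4\pi\log 8$. The point to be careful about is that the factor $-\tfrac1{2\pi}\log\d_{j,n}$, which is of order $|\log\ln|$, multiplies the $O(\mn^k)$ corrections in \eqref{2.12}, so these must be kept; in particular $-\tfrac1{2\pi}\log\d_{j,n}\cdot(-16\pi\,\mn^k c_j^k)$ is a genuine second order term and is exactly what turns the naive constant $-\tfrac{3\log 2}2$ into $-\tfrac{3\log 2-1}2$ in \eqref{2.7}. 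I would also use the blow--up relation $2\log\d_{j,n}=\log\ln+b_j+o(1)$, $b_j:=8\pi R(\kappa_j)+8\pi\sum_{h\ne j}G(\kappa_j,\kappa_h)-\log 64$, which follows from the same matching applied to $\un$.

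Collecting terms, cancelling $\vn^k(x_{j,n})$ and dividing by $\mn^k$, I expect to reach
\begin{equation*}
\theta_n^k\,\vn^k(x_{j,n})+2(3\log 2-1)\,c_j^k-8\pi\,\big(h\,\c^k\big)_j=o(1),\qquad j=1,\dots,m,
\end{equation*}
with $\theta_n^k:=-2\log\ln-1/\mn^k$ and $\big(h\,\c^k\big)_j=\big(R(\kappa_j)+2\sum_{i\ne j}G(\kappa_i,\kappa_j)\big)c_j^k-\sum_{i\ne j}G(\kappa_j,\kappa_i)c_i^k$, the matrix $(h_{ij})$ emerging once the terms involving $R(\kappa_j)$ and the $G(\kappa_j,\kappa_h)$ are recombined with $b_j$ (using $G(\kappa_i,\kappa_j)=G(\kappa_j,\kappa_i)$). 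Passing to the limit along the subsequence of Theorem \ref{t3}: since $\c^k\ne\bm{0}$, choosing $j$ with $c_j^k\ne0$ shows that $\theta_n^k$ is bounded and converges to some $\theta^k$, and the relation forces $h\,\c^k=\L^k\c^k$ with $\L^k:=(\theta^k+6\log 2-2)/(8\pi)$ (when $c_j^k=0$ it simply reads $(h\,\c^k)_j=0$). So $\c^k$ is an eigenvector of $(h_{ij})$, and since the $\c^1,\dots,\c^m$ are nonzero and pairwise orthogonal by \eqref{2.11a} they span $\R^m$, whence $\L^1,\dots,\L^m$ are all the eigenvalues of $(h_{ij})$ with multiplicity; and $\mn^1<\mn^2\le\cdots$ forces $\L^1\le\cdots\le\L^m$, so $\L^k$ is the $k$-th eigenvalue. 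Finally $\theta^k=8\pi\L^k-6\log 2+2$ together with $1/\mn^k=-2\log\ln-\theta_n^k$ give $1/\mn^k=-2\log\ln-\theta^k+o(1)$, and inverting this expansion produces exactly \eqref{2.7}; the limiting $\L^k$ being subsequence--independent, the whole sequence converges.

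The hard part will not be the algebra — which is forced once one recognises $(h_{ij})$ — but the error analysis: the large prefactor $\log\d_{j,n}\sim\tfrac12\log\ln$ amplifies every remainder, so that the crude convergences $\tv_{h,n}^k\to c_h^k$ and $\ln\int e^{\un}\to8\pi$ are not enough; one really needs the second order expansion \eqref{2.12}, the $o(1)$ in the relation between $\d_{j,n}$ and $\ln$, and a uniform (weighted) control of the remainders and of the tails of the rescaled integrals over $B_{R/\d_{j,n}}(0)$, so that products of the type $\log\d_{j,n}\cdot o(\mn^k)$ are genuinely $o(1)$ and, after multiplication by the outer $\mn^k$, do not pollute the second order term.
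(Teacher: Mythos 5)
Your route is genuinely different from the paper's, and the algebra is right: your relation $\theta_n^k\,\vn^k(x_{j,n})+2(3\log 2-1)c_j^k-8\pi(h\c^k)_j=o(1)$ is exactly the paper's identity \eqref{4.26} after dividing by the local mass, and the inversion of $1/\mn^k$, the identification of $\L^k$ as the $k$-th eigenvalue via the orthogonality \eqref{2.11a}, and the removal of the subsequence all coincide with the paper. The difference is how that relation is reached. You expand the Green representation of $\vn^k(x_{j,n})$ itself, which forces you to know $\s_{j,n}^k:=\ln\int_{B_R(x_{j,n})}e^{\un}\vn^k\,dx$ to precision $o(\mn^k)=o(1/|\log\ln|)$, since it is multiplied by $\log\d_{j,n}^{-1}\sim\frac12|\log\l_n|$. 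The paper instead applies Green's second identity to the pair $\bigl(\un,\vn^k/\mn^k\bigr)$ on $B_R(x_{j,n})$ (Proposition \ref{p4.2}): the volume term $\ln\int e^{\un}\vn^k\un$ is split as $\un(x_{j,n})\s^k_{j,n}+\int e^{\tu_{j,n}}\tv^k_{j,n}\tu_{j,n}$, so the large factor stays attached to $\s_{j,n}^k$ as a single unknown and only the crude limits $\tv_{j,n}^k\to c_j^k$ and $\s_{j,n}^k\to 8\pi c_j^k$ are ever needed; no second-order information on the eigenfunction enters. Your $-16\pi c_j^k$ arises as $\log\d_{j,n}^{-1}\cdot\mn^kc_j^k\int e^UU$, theirs as $\int e^Uc_j^kU$ in \eqref{4.2} directly: same number, obtained at very different cost.

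That cost is where your gap lies. The expansion \eqref{2.12} holds in $C^{2,\alpha}_{loc}(\R^2)$, i.e.\ with an $o(\mn^k)$ remainder only on compact sets, while you must integrate it against $e^{\tu_{j,n}}$ over the whole expanding ball $B_{R/\d_{j,n}}(0)$ and then multiply by $\log\d_{j,n}^{-1}$. The tail $\int_{|\tx|>M}e^{\tu_{j,n}}\bigl|\tv^k_{j,n}-\vn^k(x_{j,n})\bigr|$ is only $O(M^{-2})$ from the trivial bound $\|\vn^k\|_\infty\le 1$ and \eqref{2.6a}, so you need $M=M_n\to\infty$ at least like $|\log\l_n|$, hence a version of \eqref{2.12} uniform on balls of radius $M_n\to\infty$ --- a genuine strengthening that neither the statement of Theorem \ref{t2} nor its proof in the paper provides. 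You flag this (``uniform weighted control of the remainders and of the tails'') but do not supply it, and it is the whole difficulty: the estimate you need is precisely Proposition \ref{p4.4}, which the authors obtain \emph{downstream}, by subtracting \eqref{4.27} from \eqref{4.26}, i.e.\ as a corollary of the Green-identity argument rather than as an input. (There is also an ordering point to check: \eqref{2.12} is proved in the paper after Theorem \ref{t1}; its proof does not in fact use \eqref{2.7}, so no circularity, but you are invoking a strengthened form of a result the paper proves later.) To complete your proof you must either establish the uniform version of \eqref{2.12}, or replace the local step by the symmetric Green identity --- at which point you recover the paper's argument.
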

So the effect of the domain $\Omega$ on the eigenvalues $\mn^k$
appears in the second order term of the expansion of $\mn^k$.

The paper is organized as follows: in Section \ref{s0} we give some definitions and we recall some known facts. In Section \ref{s4} we prove Theorem \ref{t1} and some results on the vector  $\c^k$ introduced in Theorem \ref{t2}. In Section \ref{s5} we complete the proof of Theorem \ref{t2} and prove Theorem \ref{j2} and Theorem \ref{j3}.
\section{Preliminaries and known facts}
\label{s0}
Let us recall some results about the asymptotic behavior of $\un=\un(x)$ as $n\to +\infty$.
In \cite{NS90}, the authors proved that, along a sub-sequence,
\begin{equation}\label{2.1a}
\ln \intO e^{\un}\, dx\rightarrow 8\pi m
\end{equation}
for some $m=0,1,2, \cdots, +\infty$. Moreover
\begin{itemize}
\item
If $m=0$ the pair $({\lambda_n},u_{\lambda_n} )
$
converges to
$(0,0)$ as ${\lambda_n}\rightarrow0$.
\item
If $m=+\infty$ it holds the entire blow-up of the solution
$u_n$, i. e.
$\inf_Ku_n\rightarrow+\infty$ for any $K\Subset \Omega$.
\item
If $0<m<\infty$ the solutions $\{ u_{n}\}$ blow-up at $m$-points.
Thus there is a set ${\cal S}=\{ \kappa_1, \cdots,
\kappa_m\}\subset \Omega$ of $m$ distinct points such
that  $\nor \un \nor_{L^{\infty}(\omega)}=O(1)$ for any
$\omega\Subset \overline{ \Omega}\setminus \S$,
\[
\un{|_{\S}}\rightarrow +\infty \quad \hbox{ as }n\to \infty,
\]
and
\begin{equation}\label{2.2}
\un \rightarrow \sum_{j=1}^m 8\pi \,G(\cdot, \ka_j)\quad \hbox{ in }C^{2}_{loc}(\overline{ \Omega} \setminus \S).
\end{equation}
\end{itemize}
In \cite{NS90}, it is also proved that the blow-up points ${\cal S}=\{ \kappa_1, \cdots, \kappa_m\}$ satisfy
\begin{equation}
\na H^m (\ka_1,\dots ,\ka_m)=0
\label{conditionS}
\end{equation}
where
\[
H^m(x_1,\dots, x_m)=\frac 12 \sum_{j=1}^m R(x_j)+\frac 12 \sum_{\substack   {1\leq j,h\leq m\\ j\neq h}}G(x_j,x_h).
\]
Here $H^m$ is  the \emph{Hamiltonian} function of the theory of vortices with equal intensities, see \cite{jm73,pl76,clmp92,kie93,clmp95} and references therein.

As we did in the introduction, let $R>0$  be such that $B_{2R}(\kappa_i)\subset\subset \Omega$ for $i=1,\dots,m$ and $B_R(\kappa_i)\cap B_R(\kappa_j)=\emptyset$ if $i\neq j$ and $x_{j,n}$, $\un$, $\tu_{j,n}$ and $\d_{j,n}$ as in \eqref{2.4}, \eqref{2.5}. In \cite{GOS11}, Corollary 4.3, it is shown that there exists a constant $d_j>0$ such that
\begin{equation}\label{2.6b}
\d_{j,n}=d_j \ln^{\frac 12}+o\left( \ln^{\frac 12}\right)
\end{equation}
as $n\to \infty$ for a sub-sequence, and in particular, $\delta_{j,n}\tend 0$.
In \cite{GOS11} the exact value of $d_j$ was not computed, but for our aim it is crucial to have it. We will give it in
\eqref{value_dj}. From \eqref{2.5} and \eqref{2.6b} we have
\begin{equation}\label{2.6c}
\un(x_{j,n})=-2 \log \ln-2\log d_j +o(1)
\end{equation}
as $n\to \infty$ for any $j=1,\dots,m$.

The function $\tu_{j,n}$ defined in the Introduction satisfies
\begin{equation}\nonumber
\left\{
\begin{array}{ll}
-\Delta \tu_{j,n}=e^{\tu_{j,n}} & \hbox{ in }B_{\frac{R}{\d_{j,n}}}(0)\\
\tu_{j,n}\leq \tu_{j,n}(0)=0& \hbox{ in }B_{\frac{R}{\d_{j,n}}}(0).
\end{array}
\right.
\end{equation}
Using the result of \cite{CL91}, it is easy to see that, for any $j=1,\dots,m$
\begin{equation}\label{2.6z}
\tu_{j,n}(\tx)\rightarrow U(\tx)=\log \frac 1{\left( 1+\frac{|\tx|^2}8\right)^2} \quad \hbox{ in }C^{2,\alpha}_{loc}(\R^2).
\end{equation}
Moreover, it holds
\begin{equation}\label{2.6a}
\big| \tu_{j,n}(\tx)- U(\tx)\big|\leq C\quad \forall \tx \in B_{\frac{R}{\d_{j,n}}}(0)
\end{equation}
for any $j=1,\dots,m$ for a suitable positive constant $C$, see \cite{YYL99}.

Let us consider the eigenfunction $v_n^k$ defined in \eqref{autov-n} and recall the following result:
\begin{theorem}[\cite{GGOS12}]\label{t1a}
For $\lambda_n\rightarrow0$, it holds that
\begin{equation}\label{10}
 \mn^k=-\frac 12 \frac 1{\log \ln}+o\left(\frac 1{\log \ln}\right)\quad\text{for $1\leq k\leq
 m$},
\end{equation}
\begin{equation}\label{10_1}
 \mn^{k}=1-48\pi\eta^{2m-(k-m)+1}\ln+o\left(\ln\right)\quad\text{for $m+1\leq k\leq 3m$},
\end{equation}
and
\begin{equation}\label{10_2}
\mn^k>1\quad\text{for $k\geq 3m+1$}
\end{equation}
where $\eta^k$\,$(k=1,\cdots,2m)$ is the $k$-th eigenvalue of the
matrix $D(\mathrm{Hess}H^m)D$ at $(\ka_1,\cdots,\ka_m)$. Here $D=(D_{ij})$ is the diagonal matrix $\mathrm{diag}[d_1,d_1,d_2,d_2,\cdots,d_m,d_m]$
 (see \eqref{2.6b} for
the definition of the constants $d_j$ and \eqref{value_dj} for the precise value of it).
\end{theorem}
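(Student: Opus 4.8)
The plan is to reduce the eigenvalue problem \eqref{autov-n} to the entire (blow-up) linearized Liouville problem, to read off the admissible limiting eigenvalues from the completely explicit spectrum of that limit, and then to compute the first correction inside each spectral cluster by matching, respectively by a finite-dimensional reduction. First I would rescale $v_n^k$ around each $x_{j,n}$ as in \eqref{2.8}, so that $\tv_{j,n}^k$ solves \eqref{2.8a}; since $\tu_{j,n}\to U$ in $C^{2,\alpha}_{loc}(\R^2)$ by \eqref{2.6z}, any locally uniform limit $w$ of $\tv_{j,n}^k$ solves $-\Delta w=\mu\,e^{U}w$ on $\R^2$ with $\mu=\lim\mn^k$. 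The key preliminary step is to classify the bounded solutions of this entire problem: via the stereographic projection carrying $e^{U}\,dx$ to a round metric of area $8\pi$ on $S^2$, the two-dimensional conformal identity turns $-\Delta w=\mu e^{U}w$ into $-\Delta_{S^2}w=\mu w$, so the admissible eigenvalues are $\mu=\ell(\ell+1)/2$, $\ell=0,1,2,\dots$, namely $\mu=0$ (constants, multiplicity $1$), $\mu=1$ (multiplicity $3$, spanned by the translation modes $\partial_{x_1}U,\partial_{x_2}U$ and the dilation mode $x\cdot\nabla U+2$), and $\mu\ge 3$ for $\ell\ge 2$. With $m$ bubbles this produces exactly the three clusters of the statement: $m$ eigenvalues near $0$, $2m$ near $1$ coming from the translations, and the remaining spectrum (the $m$ dilation modes together with the $\ell\ge 2$ families) lying above $1$.

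For the cluster near $0$, i.e. \eqref{10} with $1\le k\le m$, the rescaled limit is a nonzero constant $c_j^k$, while away from $\S$ one has $v_n^k\approx \mn^k\,8\pi\sum_h c_h^k\,G(\cdot,\kappa_h)$ as in \eqref{2.11}. I would match the inner constant value $v_n^k(x_{j,n})\to c_j^k$ to the outer Green-function profile evaluated at the scale $\delta_{j,n}$: since $G(\cdot,\kappa_j)\approx\frac1{2\pi}\log\frac1{|x-\kappa_j|}+K(\kappa_j,\kappa_j)$, this gives $c_j^k\approx \mn^k\,8\pi\,\frac1{2\pi}\,c_j^k\log\frac1{\delta_{j,n}}$. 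Using $\log\frac1{\delta_{j,n}}=-\tfrac12\log\ln+O(1)$ from \eqref{2.6b}, the leading balance reads $1\approx 4\,\mn^k\log\frac1{\delta_{j,n}}=-2\,\mn^k\log\ln\,(1+o(1))$, whence $\mn^k=-\tfrac12(\log\ln)^{-1}+o((\log\ln)^{-1})$. The matching is made rigorous by testing \eqref{autov-n} against $v_n^k$ itself and against $G(\cdot,\kappa_j)$, controlling the errors with the uniform bound \eqref{2.6a}.

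For the cluster near $1$, i.e. \eqref{10_1} with $m+1\le k\le 3m$, the $2m$ rescaled limits are linear combinations of $\partial_{x_1}U,\partial_{x_2}U$ at the various bubbles, so $\mu=1$ is degenerate of multiplicity $2m$ and the correction is governed by a Lyapunov--Schmidt reduction onto $\mathrm{span}\{\partial_{x_i}U\}$. Writing $\mn^k=1+\beta_n^k$, inserting the ansatz and projecting the equation onto the $2m$ translation directions, the couplings are obtained by expanding the interaction integrals $\int e^{\tu_{j,n}}\,\partial U\cdot(\text{Green tail})$; these bring in the second derivatives of the regular part $K$ and of $G(\kappa_h,\cdot)$, i.e. precisely the entries of $\mathrm{Hess}\,H^m(\kappa_1,\dots,\kappa_m)$. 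After normalizing by the weighted norms $\int_{\R^2}e^{U}(\partial_{x_i}U)^2\,dx$ and rescaling each bubble block by $d_j$, so that the effective matrix becomes $D(\mathrm{Hess}\,H^m)D$ with $D=\mathrm{diag}[d_1,d_1,\dots,d_m,d_m]$, the leading correction is $\beta_n^k=-48\pi\,\eta^{\,2m-(k-m)+1}\ln+o(\ln)$; the reordering of the index reflects that the $\mn^k$ increase while the $\eta^\ell$ are ordered increasingly and enter with a minus sign. This is the main obstacle: pinning the exact constant $48\pi$ requires the explicit value of $d_j$ recorded in \eqref{value_dj}, and one must show that the neglected off-diagonal and higher-order terms are genuinely $o(\ln)$.

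Finally, for the cluster above $1$, i.e. \eqref{10_2} with $k\ge 3m+1$, I would argue by Courant--Fischer. The first $3m$ eigenfunctions exhaust the constant and the translation modes of the $m$ bubbles, so any normalized function orthogonal to them concentrates, after rescaling, on the dilation modes or on the $\ell\ge 2$ modes, for all of which the limiting Rayleigh quotient is $\ge 1$, with the dilation direction strictly pushed above $1$ by the same reduction as in the previous paragraph; hence $\mn^k>1$ for $n$ large. The multiplicity bookkeeping ($m$, then $2m$, then the rest) is enforced throughout by this min--max counting, in agreement with the multiplicities $1,3,5,\dots$ of the sphere spectrum.
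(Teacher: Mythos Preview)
The statement you are proposing to prove is not proved in this paper at all. It appears in Section~\ref{s0} (``Preliminaries and known facts'') as a quoted result from the companion paper \cite{GGOS12}; the present paper only \emph{refines} the leading-order estimate \eqref{10} to the second-order expansion \eqref{2.7} of Theorem~\ref{t1}, and its machinery (the Green-representation identity of Proposition~\ref{p4.2}, the local-mass expansion of Proposition~\ref{p4.1}, and the algebra leading to \eqref{4.26}) does not address the middle cluster \eqref{10_1} or the strict bound \eqref{10_2}. So there is no proof here to compare your proposal against.

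As an outline of the argument in \cite{GGOS12}, your sketch is broadly reasonable: the classification of bounded solutions of $-\Delta w=\mu e^{U}w$ via the stereographic correspondence, the resulting $m+2m+\text{rest}$ cluster picture, the inner/outer matching for the first cluster, and a finite-dimensional reduction for the second are indeed the structural ingredients. One point you pass over too quickly is the splitting at $\mu=1$: on each bubble the limiting kernel has dimension $3$ (two translations \emph{and} the dilation mode $x\cdot\nabla U+2$), so a priori there are $3m$ directions near $1$, and you must actually prove---not merely assert via Courant--Fischer---that the $m$ dilation directions are pushed \emph{strictly above} $1$ while the $2m$ translation directions sit below $1$ with the $O(\ln)$ correction. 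A soft min--max argument does not separate these, since both families have limiting Rayleigh quotient exactly $1$; in \cite{GGOS12} this is handled by Pohozaev-type identities, which your proposal does not mention. Also, the constant $48\pi$ is fixed by explicit integrals of $e^{U}$ against quadratics in $\nabla U$, not by the value of $d_j$ in \eqref{value_dj}: the $d_j$ are already absorbed into the conjugating matrix $D$, so they affect $\eta^k$, not the prefactor.
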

One of the purpose of this paper is to refine \eqref{10} (see Theorem \ref{t1} in the introduction).
\section{Fine behavior of eigenvalues}
\label{s4}
We start from the following proposition, which plays a crucial role in our argument.
\begin{proposition}\label{p4.2}
For any $k=1,\dots,m$ we have
\begin{align}
&\left\{\frac 1{\mn^k}-\un(x_{j,n})\right\}\ln \int_{B_R(x_{j,n})}\!\!\!\!\!\!\!\!\!e^{\un}\vn^k \, dx
\nonumber\\
&\qquad=(8\pi)^2 \sum_{\substack   {1\leq i\leq m\\ i\neq j}}(c_i^k-c_j^k)G(\kappa_j,\kappa_i)-16\pi c_j^k +o(1).
\label{4.18}
\end{align}
\end{proposition}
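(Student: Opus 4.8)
The plan is to derive \eqref{4.18} by testing the equation \eqref{autov-n} for $\vn^k$ against suitable functions built from the Green function, and by carefully tracking the boundary terms on $\de B_R(x_{j,n})$. The starting point is the representation formula: since $-\Delta \vn^k=\mn^k\ln e^{\un}\vn^k$ in $\Om$ with $\vn^k=0$ on $\de\Om$, one has $\vn^k(x)=\mn^k\ln\intO G(x,y)e^{\un(y)}\vn^k(y)\,dy$. Splitting the integral over the balls $B_R(\kappa_i)$ and the complement, and using that $e^{\un}$ concentrates as a sum of Dirac masses of weight $8\pi/\ln$ at the $\kappa_i$ (from \eqref{2.1a}, \eqref{2.2}) while $\vn^k\to 8\pi\mn^k\sum c_i^k G(\cdot,\kappa_i)$ away from $\S$ (from \eqref{2.11}), the contribution of each far ball is controlled; the main terms come from $\ln\int_{B_R(x_{i,n})}e^{\un}\vn^k\,dx$.

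The key quantitative input is a sharp expansion of the local mass $\ln\int_{B_R(x_{j,n})}e^{\un}\vn^k\,dx$. First I would rescale: by \eqref{2.4}, \eqref{2.5}, \eqref{2.8}, \eqref{2.5}, $\ln\int_{B_R(x_{j,n})}e^{\un}\vn^k\,dx=\int_{B_{R/\d_{j,n}}(0)}e^{\tu_{j,n}}\tv_{j,n}^k\,d\tx$. From Theorem \ref{t3}, $\tv_{j,n}^k\to c_j^k$ in $\C2al(\R^2)$, and $e^{\tu_{j,n}}\to e^U$ with the uniform bound \eqref{2.6a}; since $\int_{\R^2}e^U=8\pi$, to leading order this integral is $8\pi c_j^k$. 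But \eqref{4.18} needs the \emph{next} order, because it is multiplied by $\{1/\mn^k-\un(x_{j,n})\}$, which by \eqref{10} and \eqref{2.6c} behaves like $-2\log\ln+2\log\ln+O(1)=O(1)$... more precisely $1/\mn^k=-2\log\ln+O(1)$ while $\un(x_{j,n})=-2\log\ln-2\log d_j+o(1)$, so the bracket is $O(1)$ and one only needs the product up to $o(1)$. This means the leading $8\pi c_j^k$ term of the local mass, times the $O(1)$ bracket, must be computed, plus one must show the corrections to the local mass are $o(1/\log\ln)$ so they don't contribute. The bracket itself has to be expanded using the refined estimate: combining \eqref{2.6c} with the yet-to-be-established value of $d_j$ (equation \eqref{value_dj}) and with Pohozaev-type identities for $\un$ on $B_R(x_{j,n})$, one gets $\un(x_{j,n})$ in terms of the Robin function and the Green-function interactions. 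This is where the structure $R(\kappa_j)+2\sum_{i\ne j}G(\kappa_j,\kappa_i)$ — i.e. the diagonal of $(h_{ij})$ — will emerge, together with the universal constant producing the $-16\pi c_j^k$ and the $(8\pi)^2(c_i^k-c_j^k)G$ terms.

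Concretely, the cleanest route is a Pohozaev identity on $B_R(x_{j,n})$ applied to the pair $(\un,\vn^k)$, or rather testing $-\Delta\vn^k=\mn^k\ln e^{\un}\vn^k$ with $(x-x_{j,n})\cdot\na\vn^k$ and with $\vn^k$ itself, then passing to the rescaled variables. The boundary integrals over $\de B_R(x_{j,n})$ are evaluated using the $\C2al$ convergence \eqref{2.11} of $\vn^k/\mn^k$ to $8\pi\sum c_i^k G(\cdot,\kappa_i)$ and of $\un$ to $8\pi\sum G(\cdot,\kappa_i)$ from \eqref{2.2}, expanded near $\kappa_j$: there $8\pi\sum_i G(x,\kappa_i)=8\pi\big(\frac{1}{2\pi}\log|x-\kappa_j|^{-1}+R(\kappa_j)+\sum_{i\ne j}G(\kappa_j,\kappa_i)\big)+o(1)$, and similarly for $\vn^k/\mn^k$ with the $c_i^k$ weights. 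Matching the singular parts (which gives the $8\pi$ and $8\pi c_j^k$ normalizations) and collecting the regular parts yields the right-hand side of \eqref{4.18}. The main obstacle, I expect, is the bookkeeping of all the $O(1)$ and $o(1)$ boundary contributions on $\de B_R(x_{j,n})$ and the interplay with the precise value of $d_j$: one must show that everything except the claimed combination of Green-function values genuinely cancels or is negligible, which requires the second-order information about $\un$ near its peaks — essentially a sharp asymptotic of $\un$ that refines \eqref{2.2} and \eqref{2.6c}, of the type available from \cite{GOS11} once $d_j$ is pinned down. Once that sharp expansion is in hand, combining it linearly against the $c_i^k$ weights from \eqref{2.11} gives \eqref{4.18}.
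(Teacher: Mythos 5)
Your plan goes wrong at the decisive step: you propose to evaluate the product on the left of \eqref{4.18} by expanding the two factors separately, i.e.\ by computing the bracket $\frac 1{\mn^k}-\un(x_{j,n})$ up to $o(1)$ and the local mass $\ln\int_{B_R(x_{j,n})}e^{\un}\vn^k\,dx$ to the next order. This is circular within the logical structure of the paper: the only a priori information on $\mn^k$ at this stage is \eqref{10}, which gives $\frac 1{\mn^k}=-2\log\ln+o(\log\ln)$, so the bracket is not even known to be $O(1)$, let alone computable to $o(1)$; the $O(1)$-precision expansion of $\frac 1{\mn^k}$ is precisely \eqref{L}, i.e.\ Theorem \ref{t1}, which is deduced \emph{from} Proposition \ref{p4.2}. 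Likewise, the refined expansion \eqref{norm-infty} of $\un(x_{j,n})$ and the value \eqref{value_dj} of $d_j$ that you lean on are established afterwards and are not needed for this proposition. Your alternative suggestion, a Pohozaev identity obtained by testing the equation with $(x-x_{j,n})\cdot\na\vn^k$ or with $\vn^k$ itself, produces quantities quadratic in $\vn^k$ or of the type $\int e^{\un}\vn^k\,(x-x_{j,n})\cdot\na\un$; that is the tool relevant for the eigenvalues close to $1$, not for the linear pairing appearing in \eqref{4.18}.

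The identity that actually does the job is Green's second identity on $B_R(x_{j,n})$ for the pair $\un$ and $\vn^k/\mn^k$. Using the two equations, the volume term equals $-\frac 1{\mn^k}\ln\int_{B_R(x_{j,n})}e^{\un}\vn^k\,dx+\ln\int_{B_R(x_{j,n})}e^{\un}\vn^k\un\,dx$; writing $\un=\un(x_{j,n})+\left(\un-\un(x_{j,n})\right)$ and rescaling, the remainder converges to $\int_{\R^2}e^U c_j^k U\,d\tx=-16\pi c_j^k$, so the entire product $\left\{\frac 1{\mn^k}-\un(x_{j,n})\right\}\ln\int e^{\un}\vn^k\,dx$ is captured as a single quantity, with no need to expand either factor. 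The boundary integral $\int_{\de B_R(x_{j,n})}\left\{\frac{\de \un}{\de \nu}\frac{\vn^k}{\mn^k}-\un\frac{\de}{\de\nu}\left(\frac{\vn^k}{\mn^k}\right)\right\}d\sigma$ is then evaluated from \eqref{2.2} and \eqref{2.11} (here your intuition about matching Green functions on $\de B_R$ is the correct one) and gives $-(8\pi)^2\sum_{i\neq j}(c_i^k-c_j^k)G(\kappa_j,\kappa_i)+o(1)$, after one more application of Green's identity to each pair $G(\cdot,\kappa_i)$, $G(\cdot,\kappa_h)$. As written, your proposal does not close.
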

\begin{proof}
From \eqref{1} and \eqref{autov-n}, we have
\begin{align}
\int_{\de B_R(x_{j,n})} \Big\{ \Big.& \frac{\de \un}{\de \nu}  \frac{\vn^k}{\mn^k}-\un \frac{\de }{\de \nu}\left( \frac {\vn^k}{\mn^k}\right)\Big.\Big\}\, d\sigma\nonumber\\
=&\int_{ B_R(x_{j,n})} \left\{ \Delta \un  \frac{\vn^k}{\mn^k}-\un \Delta \frac {\vn^k}{\mn^k}\right\}\, dx\nonumber\\
=&-\frac 1{\mn^k}\ln \int_{ B_R(x_{j,n})}e^{\un}\vn^k \, dx+\ln \int_{ B_R(x_{j,n})}e^{\un}\vn^k \un\, dx\nonumber\\
=&-\frac 1{\mn^k}\ln \int_{ B_R(x_{j,n})}e^{\un}\vn^k \, dx+\un(x_{j,n})\ln \int_{ B_R(x_{j,n})}e^{\un}\vn^k \, dx\nonumber\\
&+\int_{B_{\frac R{\d_{j,n}}}(0)} e^{\tu_{j,n}}\tv^k_{j,n}\tu_{j,n}\, d\til{x}\label{4.1a}
\end{align}
and
\begin{equation}\label{4.2}
\int_{B_{\frac R{\d_{j,n}}}(0)} e^{\tu_{j,n}}\tv^k_{j,n}\tu_{j,n}\, dx\to\int_{\R^2}e^Uc_j^k U\, dx= -16 \pi c_j^k.
\end{equation}
On the other hand, from \eqref{2.2} and \eqref{2.11}, we have
 \begin{align}
&\int_{\de B_R(x_{j,n})}  \Big\{\frac{\de \un}{\de \nu} \frac{\vn^k}{\mn^k}-\un \frac{\de }{\de \nu}\left( \frac {\vn^k}{\mn^k}\right)\Big\}\, d\sigma\nonumber\\
&\to(8\pi)^2 \sum_{i=1}^m \sum_{h=1}^m c_h^k\int_{\de B_R(\kappa_{j})} \!\!\!\left\{\frac{\de}{\de \nu}G(x,\kappa_i)G(x,\kappa_h)-G(x,k_i)\frac{\de }{\de \nu} G(x,\kappa_h)\right\}\, d\sigma.\label{4.3}
\end{align}
We let
\[
I_{i,h}=\int_{\de B_R(\kappa_{j})} \left\{\frac{\de}{\de \nu}G(x,\kappa_i)G(x,\kappa_h)-G(x,\kappa_i)\frac{\de }{\de \nu} G(x,\kappa_h)\right\}\, d\sigma.
\]
Then we have\\
\noindent \underline{case 1}: $i=h$\\
\[
I_{i,h}=0.
\]
\noindent \underline{case 2}: $i\neq h$\\
In this case we have
\begin{align*}
I_{i,h}&=\int_{B_R(\kappa_{j})} \left\{\Delta G(x,\kappa_i) G(x,\kappa_h)- G(x,\kappa_i) \Delta G(x,\kappa_h)\right\}\, d\sigma\\
&=-G(\kappa_j,\kappa_h)\d_i ^j+G(\kappa_j,\kappa_i)\d_j^h
\end{align*}
where  $\d_a^b=1$ if $a=b$ and $\d_a^b=0$  else.

Therefore, from \eqref{4.3} we have
\begin{align}
\int_{\de B_R(x_{j,n})}& \left\{ \frac{\de \un}{\de \nu} \frac{\vn^k}{\mn^k}-\un \frac{\de }{\de \nu}\left( \frac {\vn^k}{\mn^k}\right)\right\}\, d\sigma\nonumber\\
&=(8\pi)^2\sum_{i=1}^m \sum_{\substack   {1\leq h\leq m\\ h\neq i}} c_h^k\left\{-G(\kappa_j,\kappa_h)\d_i^j+G(\kappa_j,\kappa_i)\d_j^h\right\}+o(1)\nonumber\\
&=(8\pi)^2\Big\{-\sum_{\substack   {1\leq h\leq m\\ h\neq j}} c_h^kG(\kappa_j,\kappa_h)+\sum_{\substack   {1\leq i\leq m\\ i\neq j}} c_j^kG(\kappa_j,\kappa_i)\Big\}+o(1)\nonumber\\
&=-(8\pi)^2 \sum_{\substack   {1\leq i\leq m\\ i\neq j}} \left(c_i^k-c_j^k\right) G(\kappa_j,\kappa_i)+o(1).\label{4.4}
\end{align}
The proof follows from \eqref{4.1a}, \eqref{4.2}, and \eqref{4.4}.
\end{proof}
Next we are going to get the precise value of $d_j$ in \eqref{2.6c}. To this purpose we need to strengthen \eqref{2.6c}.
\begin{proposition}[(cf. Estimate D in \cite{CL02})]
\label{p4.1}
Let $\un$ be a solution of \eqref{1} corresponding to $\ln$, and let $x_{j,n}$ and $R$ be as in Section \ref{s1}. Then, for any $j=1,\dots,m$ we have
\begin{equation}\label{norm-infty}
\un(x_{j,n})=-\frac{\s_{j,n}}{\s_{j,n}-4\pi} \log \ln -8\pi \Big\{ R(x_{j,n})+\sum_{\substack   {1\leq i\leq m\\ i\neq j}} G(x_{j,n},x_{i,n})\Big\}+6\log 2 +o(1)
\end{equation}
where
\begin{equation}\label{sigma}
\s_{j,n}=\ln \int_{B_R(x_{j,n})}e^{\un}\, dx \to 8\pi.
\end{equation}
\end{proposition}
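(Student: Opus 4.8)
The plan is to evaluate the Green representation formula $\un(x)=\ln\intO G(x,y)e^{\un(y)}\,dy$ at the point $x=x_{j,n}$, split $\Om=B_R(x_{j,n})\cup(\Om\setminus B_R(x_{j,n}))$, compute the contribution of each piece up to $o(1)$, and then remove the resulting $\log\d_{j,n}$ by means of the normalization \eqref{2.5}. The change of variables $y=\d_{j,n}\ty+x_{j,n}$ together with \eqref{2.5} turns $\ln e^{\un(y)}\,dy$ into $e^{\tu_{j,n}(\ty)}\,d\ty$ and carries $B_R(x_{j,n})$ onto $B_{R/\d_{j,n}}(0)$; this is the mechanism behind all the rescaled integrals below, and the uniform bound $e^{\tu_{j,n}}\le e^{C}e^{U}$ coming from \eqref{2.6a} (note that $e^{C}e^{U}$, and also $\bigl|\log|\ty|\bigr|\,e^{C}e^{U}$, is integrable over $\R^2$) is what lets us pass to the limit on these growing domains.

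On $B_R(x_{j,n})$ I would use $G(x,y)=\tfrac1{2\pi}\log|x-y|^{-1}+K(x,y)$. For the regular part, $K(x_{j,n},\cdot)$ has a Lipschitz bound uniform in $n$ on $B_R(\ka_j)$, so $\ln\int_{B_R(x_{j,n})}K(x_{j,n},y)e^{\un}\,dy=\s_{j,n}K(x_{j,n},x_{j,n})+O\!\left(\ln\int_{B_R(x_{j,n})}|y-x_{j,n}|e^{\un}\,dy\right)=8\pi R(x_{j,n})+o(1)$, the error being $\d_{j,n}\int_{B_{R/\d_{j,n}}(0)}|\ty|e^{\tu_{j,n}}\,d\ty=O(\d_{j,n})$ and $\s_{j,n}\to8\pi$. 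For the singular part, the same change of variables gives $\tfrac1{2\pi}\int_{B_R(x_{j,n})}\log|x_{j,n}-y|^{-1}\ln e^{\un}\,dy=-\tfrac{\s_{j,n}}{2\pi}\log\d_{j,n}-\tfrac1{2\pi}\int_{B_{R/\d_{j,n}}(0)}\log|\ty|\,e^{\tu_{j,n}}\,d\ty$, and by dominated convergence (using \eqref{2.6z} and the domination above) the last integral tends to $\int_{\R^2}\log|\ty|\,e^{U}\,d\ty=12\pi\log2$. Hence the $B_R(x_{j,n})$–contribution is $-\tfrac{\s_{j,n}}{2\pi}\log\d_{j,n}-6\log2+8\pi R(x_{j,n})+o(1)$.

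On $\Om\setminus B_R(x_{j,n})$ the measure $\ln e^{\un}\,dx$ puts mass $8\pi+o(1)$ on each ball $B_R(\ka_i)$, $i\ne j$, by the blow-up analysis of \cite{NS90}, while on the complement of all the balls $\ln e^{\un}\to0$ uniformly and $G(x_{j,n},\cdot)$ stays bounded; the same Lipschitz/rescaling estimate then yields $\ln\int_{\Om\setminus B_R(x_{j,n})}G(x_{j,n},y)e^{\un}\,dy=8\pi\sum_{i\ne j}G(x_{j,n},x_{i,n})+o(1)$. Adding the two contributions gives $\un(x_{j,n})=-\tfrac{\s_{j,n}}{2\pi}\log\d_{j,n}-6\log2+8\pi\bigl\{R(x_{j,n})+\sum_{i\ne j}G(x_{j,n},x_{i,n})\bigr\}+o(1)$. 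Finally, \eqref{2.5} reads $\log\d_{j,n}=-\tfrac12\bigl(\log\ln+\un(x_{j,n})\bigr)$; substituting this into the last identity produces the linear equation
\[
\Bigl(1-\tfrac{\s_{j,n}}{4\pi}\Bigr)\un(x_{j,n})=\tfrac{\s_{j,n}}{4\pi}\log\ln-6\log2+8\pi\bigl\{R(x_{j,n})+\textstyle\sum_{i\ne j}G(x_{j,n},x_{i,n})\bigr\}+o(1),
\]
and multiplying by $\tfrac{4\pi}{4\pi-\s_{j,n}}$ (which tends to $-1$) gives exactly \eqref{norm-infty}, since $\tfrac{\s_{j,n}}{4\pi-\s_{j,n}}=-\tfrac{\s_{j,n}}{\s_{j,n}-4\pi}$.

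I expect the only genuinely delicate point to be the bookkeeping of errors: the coefficient of $\log\ln$ in \eqref{norm-infty} must keep $\s_{j,n}$ and cannot be replaced by its limit, because an $o(1)$ error there would be amplified to $o(1)\log\ln$; so in the term carrying $\log\d_{j,n}$ every factor $\s_{j,n}$ has to be tracked exactly, whereas in the bounded terms one is free to use $\s_{j,n}\to8\pi$ and $\tfrac{4\pi}{4\pi-\s_{j,n}}\to-1$. The self-referential structure coming from \eqref{2.5} is precisely what creates the non-trivial constant $-\s_{j,n}/(\s_{j,n}-4\pi)$ in front of $\log\ln$ and the signs of the terms $-8\pi\{R+\sum G\}$ and $+6\log2$.
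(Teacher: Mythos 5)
Your argument is correct and is essentially the paper's own proof: the same Green representation at $x_{j,n}$, the same decomposition into the singular part (yielding $-\tfrac{\s_{j,n}}{2\pi}\log\d_{j,n}$ plus the rescaled integral tending to $-6\log 2$, justified via \eqref{2.6a}), the regular part and the other blow-up balls (yielding $8\pi\{R+\sum G\}$), followed by eliminating $\log\d_{j,n}$ through \eqref{2.5}. The only difference is that you spell out the final linear-equation step that the paper compresses into ``the conclusion follows by \eqref{2.5} and \eqref{sigma}'', and your remark about tracking $\s_{j,n}$ exactly in the coefficient of $\log\ln$ is exactly the right point of care.
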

\begin{proof}
Using the Green representation formula, from \eqref{1}, we have
\begin{align*}
\un(x_{j,n})=& \intO G(x_{j,n},y) \ln e^{\un(y)}\, dy\\
=&\frac 1{2\pi} \int_{B_R(x_{j,n})}\log |x_{j,n}-y|^{-1}\ln e^{\un(y)}\, dy\\
&+\int_{B_R(x_{j,n})}K(x_{j,n},y)\ln e^{\un(y)}\, dy\\
&+\sum_{\substack{1\leq i\leq m \\ i\neq j}}\int_{B_R(x_{i,n})}G(x_{j,n},y) \ln  e^{\un(y)}\, dy\\
&+\int_{\Omega \setminus \bigcup_{i=1}^mB_R(x_{i,n})} G(x_{j,n},y) \ln  e^{\un(y)}\, dy\\
=& -\frac{\s_{j,n}}{2\pi}\log \d_{j,n}+\frac 1{2\pi}\int_{B_{\frac R{\d_{j,n}}}(0)} \log |\tilde{y}|^{-1}e^{\tu_{j,n}(\tilde{y})}\, d\tilde{y}\\
&+ 8\pi \Big\{R(x_{j,n})+\sum_{\substack{1\leq i\leq m\\ i\ne\ j}}G(x_{j,n},x_{i,n})\Big\}+o(1).
\end{align*}
Using the estimate \eqref{2.6a}, we get here
\begin{equation}
\frac 1{2\pi}\int_{B_{\frac R{\d_{j,n}}}(0)}\log
|\tilde{y}|^{-1}e^{\tu_{j,n}(\tilde{y})}\, d\tilde{y}\to \frac
1{2\pi}\int_{\R^2}\log |\tilde{y}|^{-1}e^{U(\tilde{y})}\,
d\tilde{y}=-6\log 2. \label{A_0}
\end{equation}
Then the conclusion follows by  \eqref{2.5} and \eqref{sigma}.
\end{proof}
Here we recall a fine behavior of the local mass $\s_{j,n}$ defined in \eqref{sigma}.
\begin{proposition}\label{p5.1}
For any $j\in \{1,\dots,m\}$ we have
\begin{equation}
\label{middle-est}
\s_{j,n}=8\pi +o\left(\ln^\frac{1}{2}\right)
\end{equation}
\end{proposition}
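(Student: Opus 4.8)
My proof of \eqref{middle-est} would combine a Pohozaev identity on $B_R(x_{j,n})$ with an explicit Green‑function computation, the point being that the deviation $\s_{j,n}-8\pi$ ends up satisfying an algebraic relation with a $o(\ln^{1/2})$ remainder. \textbf{Step 1 (Pohozaev).} Multiplying $-\lap\un=\ln e^{\un}$ by $(x-x_{j,n})\cdot\na\un$ and integrating over $B_R(x_{j,n})$, the usual integration by parts together with $\s_{j,n}=\ln\int_{B_R(x_{j,n})}e^{\un}$ from \eqref{sigma} gives
\[
2\s_{j,n}=\frac R2\int_{\de B_R(x_{j,n})}\big[(\de_\nu\un)^2-(\de_\tau\un)^2\big]\,d\sigma+R\,\ln\int_{\de B_R(x_{j,n})}e^{\un}\,d\sigma ,
\]
where $\de_\nu,\de_\tau$ are the normal and tangential derivatives on the circle. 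Since $\un$ is uniformly bounded on $\de B_R(x_{j,n})$ (which, for $n$ large, lies in a fixed compact subset of $\overline\Om\setminus\S$), the last term is $O(\ln)=o(\ln^{1/2})$, so everything reduces to the boundary quadratic form.

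\textbf{Step 2 (sharp Green expansion near $\de B_R(x_{j,n})$).} Writing $\un(x)=\ln\intO G(x,y)e^{\un(y)}\,dy$, I would split the integral over the balls $B_R(x_{i,n})$ and their complement and Taylor expand $G(x,\cdot)$ about $x_{i,n}$ in each ball, to obtain on a fixed neighbourhood $\mathcal N$ of $\de B_R(x_{j,n})$
\[
\un=w_n+r_n,\qquad w_n:=\sum_{i=1}^m\s_{i,n}\,G(\cdot,x_{i,n}),\qquad \|r_n\|_{C^1(\mathcal N)}=o(\ln^{1/2}).
\]
Indeed, the complement of $\bigcup_iB_R(x_{i,n})$ contributes $O(\ln)$ (there $\un$ is bounded); the zeroth‑order Taylor term reproduces $\s_{i,n}G(\cdot,x_{i,n})$; the first‑order term equals $\na_yG(\cdot,x_{i,n})\cdot\d_{i,n}\int_{B_{R/\d_{i,n}}(0)}\tx\,e^{\tu_{i,n}}\,d\tx$, which is $o(\ln^{1/2})$ because $\d_{i,n}=O(\ln^{1/2})$ and, by \eqref{2.6z} (radial symmetry of $U$) and \eqref{2.6a}, the moment satisfies $\int_{B_{R/\d_{i,n}}(0)}\tx\,e^{\tu_{i,n}}\,d\tx\to\int_{\R^2}\tx\,e^{U}\,d\tx=0$; and the second‑order term is $O(\d_{i,n}^2|\log\ln|)=o(\ln^{1/2})$ since $\int_{B_{R/\d_{i,n}}(0)}|\tx|^2e^{\tu_{i,n}}\,d\tx=O(|\log\ln|)$. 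For the ball $B_R(x_{j,n})$ itself one first discards the mass in $\{R/4\le|y-x_{j,n}|<R\}$, where $\ln e^{\un}\le C\d_{j,n}^2$, which makes the expansion about $x_{j,n}$ legitimate; the same estimates apply to $\na_x$ (with $\na_y G$ replaced by $\na_x\na_y G$, still bounded on the relevant ranges).

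\textbf{Step 3 (the cancellation and the conclusion).} Inserting $\un=w_n+r_n$ in the boundary integral of Step 1 and using $|\na w_n|=O(1)$ on $\mathcal N$, the cross and quadratic terms in $r_n$ are $o(\ln^{1/2})$, so $2\s_{j,n}=\frac R2\int_{\de B_R(x_{j,n})}[(\de_\nu w_n)^2-(\de_\tau w_n)^2]\,d\sigma+o(\ln^{1/2})$. Now decompose $w_n(x)=-\frac{\s_{j,n}}{2\pi}\log|x-x_{j,n}|+g_n(x)$ with $g_n:=\s_{j,n}K(\cdot,x_{j,n})+\sum_{i\ne j}\s_{i,n}G(\cdot,x_{i,n})$, which is harmonic in a fixed neighbourhood of $\overline{B_R(x_{j,n})}$ (the points $x_{i,n}$, $i\ne j$, stay at distance $>R$ from $x_{j,n}$, and $K(\cdot,x_{j,n})$ is harmonic in $\Om$). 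On $\de B_R(x_{j,n})$ the function $\log|x-x_{j,n}|$ is constant with normal derivative $1/R$, hence $\de_\tau w_n=\de_\tau g_n$ and $\de_\nu w_n=-\frac{\s_{j,n}}{2\pi R}+\de_\nu g_n$. Expanding the square and integrating, the term linear in $g_n$ vanishes because $\int_{\de B_R(x_{j,n})}\de_\nu g_n=\int_{B_R(x_{j,n})}\lap g_n=0$, and the term quadratic in $g_n$ vanishes by the Pohozaev identity applied to the harmonic function $g_n$: $\int_{\de B_R(x_{j,n})}[(\de_\nu g_n)^2-(\de_\tau g_n)^2]\,d\sigma=0$. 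Only the singular part survives, and
\[
\frac R2\int_{\de B_R(x_{j,n})}\big[(\de_\nu w_n)^2-(\de_\tau w_n)^2\big]\,d\sigma=\frac R2\cdot\frac{\s_{j,n}^2}{4\pi^2R^2}\cdot2\pi R=\frac{\s_{j,n}^2}{4\pi},
\]
so that $2\s_{j,n}=\s_{j,n}^2/(4\pi)+o(\ln^{1/2})$, i.e. $\s_{j,n}(\s_{j,n}-8\pi)=o(\ln^{1/2})$, and \eqref{middle-est} follows from $\s_{j,n}\to8\pi$.

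The only delicate point is Step 2 — establishing the $C^1$ expansion $\un=w_n+r_n$ with the sharp $o(\ln^{1/2})$ error, which relies on the vanishing of the first moment of $e^{\tu_{i,n}}$ and on the merely logarithmic growth of its second moment. Once that is in place, Steps 1 and 3 are a routine Pohozaev manipulation followed by the elementary (but, in view of the complete cancellation of all the Green‑function cross terms forced by the harmonicity of $g_n$, remarkably clean) computation above.
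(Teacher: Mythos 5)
Your proposal is correct, but it is worth pointing out that the paper does not actually prove this proposition: its ``proof'' is a citation to Remark 5.6 of \cite{O12} (and the remark following it notes that the stronger estimate $\s_{j,n}=8\pi+o(\ln)$ is in \cite{CL02}). So you have supplied a self-contained argument where the authors defer to the literature. Your route --- the Pohozaev identity on $B_R(x_{j,n})$ reducing $2\s_{j,n}$ to the boundary quadratic form $\frac R2\int_{\de B_R}[(\de_\nu\un)^2-(\de_\tau\un)^2]$, a $C^1$ expansion $\un=\sum_i\s_{i,n}G(\cdot,x_{i,n})+o(\ln^{1/2})$ near the circle obtained from the Green representation (with the vanishing first moment of $e^{\tu_{i,n}}$ and the logarithmic second moment supplying the $o(\ln^{1/2})$ rate via $\d_{i,n}=O(\ln^{1/2})$ from \eqref{2.6b}), and the complete cancellation of the regular part $g_n$ through harmonicity --- is exactly the standard mechanism behind such local-mass refinements, and each step checks out: the identity $2\s_{j,n}=\s_{j,n}^2/(4\pi)+o(\ln^{1/2})$ together with $\s_{j,n}\to8\pi$ does give \eqref{middle-est}. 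Two small remarks: there is no circularity in invoking \eqref{2.6b}, since that estimate is imported from \cite{GOS11} independently of the value of $d_j$ (only the latter uses the present proposition); and in Step 2 you could fold the ``second-order term'' into the integral form of the first-order Taylor remainder, which is bounded by $\sup|D_y^2G|\cdot\d_{i,n}^2\int|\tx|^2e^{\tu_{i,n}}\,d\tx=O(\ln|\log\ln|)$ directly. What your approach buys over the paper's is transparency about where the exponent $\tfrac12$ comes from: it is precisely the rate $\d_{i,n}\sim\ln^{1/2}$ times the $o(1)$ first moment, which also makes clear why upgrading to $o(\ln)$ (as in \cite{CL02}) requires genuinely sharper pointwise estimates rather than just dominated convergence.
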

\begin{proof}
see \cite[Remark 5.6]{O12}
\end{proof}
\begin{remark}
 We note that a stronger version
\begin{equation}
\label{sharp-est}
\s_{j,n}=8\pi +o\left(\ln\right)
\end{equation}
follows from (3.56) of \cite{CL02}. However, for our aims, it is sufficient to use the estimate
\eqref{middle-est}.
\end{remark}
Using Proposition \ref{p4.1} and Proposition \ref{p5.1}, we get the precise value of $d_j$ given in \eqref{2.6b}.
\begin{proposition}\label{newp5.5}
For any $j=1,..,k$ it holds,
\begin{equation}
d_j=\frac{1}{8}\exp\left\{ 4\pi R(\kappa_j)+4\pi\sum_{\substack   {1\leq i\leq m\\ i\neq j}}G(\ka_j,\ka_i)\right\}.
\label{value_dj}
\end{equation}
\end{proposition}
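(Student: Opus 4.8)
The plan is to combine the two strengthened estimates for $\un(x_{j,n})$ that are now available: the original expansion \eqref{2.6c}, namely $\un(x_{j,n})=-2\log\ln-2\log d_j+o(1)$, and the more precise Green-representation estimate \eqref{norm-infty} from Proposition \ref{p4.1}. Writing out \eqref{norm-infty} with $\s_{j,n}=8\pi+o(\ln^{1/2})$ from Proposition \ref{p5.1}, the coefficient $-\frac{\s_{j,n}}{\s_{j,n}-4\pi}$ of $\log\ln$ becomes $-\frac{8\pi+o(\ln^{1/2})}{4\pi+o(\ln^{1/2})}=-2+o(\ln^{1/2})$, so that
\[
-\frac{\s_{j,n}}{\s_{j,n}-4\pi}\log\ln=-2\log\ln+o\left(\ln^{1/2}\log\ln\right)=-2\log\ln+o(1),
\]
since $\ln^{1/2}\log\ln\to 0$ as $\ln\to 0$. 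This is the key observation that makes the two expressions for $\un(x_{j,n})$ directly comparable: the leading singular term matches exactly, and the correction term is genuinely $o(1)$.

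Next I would pass to the limit in the remaining regular terms of \eqref{norm-infty}. Since $x_{j,n}\to\kappa_j$ and $x_{i,n}\to\kappa_i$ with $\kappa_i\ne\kappa_j$ for $i\ne j$, continuity of $R$ near $\kappa_j$ and of $G$ away from the diagonal gives $R(x_{j,n})\to R(\kappa_j)$ and $G(x_{j,n},x_{i,n})\to G(\kappa_j,\kappa_i)$. Hence \eqref{norm-infty} yields
\[
\un(x_{j,n})=-2\log\ln-8\pi\Big\{R(\kappa_j)+\sum_{\substack{1\le i\le m\\ i\ne j}}G(\kappa_j,\kappa_i)\Big\}+6\log 2+o(1).
\]
Comparing this with \eqref{2.6c}, the $-2\log\ln$ terms cancel and, since both expansions hold along a common subsequence, I can equate the constant terms:
\[
-2\log d_j=-8\pi\Big\{R(\kappa_j)+\sum_{\substack{1\le i\le m\\ i\ne j}}G(\kappa_j,\kappa_i)\Big\}+6\log 2.
\]
Solving for $d_j$ gives
\[
\log d_j=4\pi R(\kappa_j)+4\pi\sum_{\substack{1\le i\le m\\ i\ne j}}G(\kappa_j,\kappa_i)-3\log 2,
\]
and exponentiating, using $e^{-3\log 2}=1/8$, produces exactly \eqref{value_dj}.

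I do not expect a serious obstacle here; the content of the proof is really the bookkeeping of absorbing the $o(\ln^{1/2})$ perturbation of $\s_{j,n}$ into an $o(1)$ error in the $\log\ln$ coefficient, together with the continuity passages to the limit. The one point that deserves care is that \eqref{2.6c} and \eqref{norm-infty} (and Proposition \ref{p5.1}) each hold only along subsequences; one must first extract a single subsequence along which all of them are simultaneously valid before equating constant terms, and then observe that the resulting value of $d_j$ is independent of the subsequence, so the full sequence converges. A secondary bit of care is checking that $\ln^{1/2}\log\ln\to 0$, which is elementary. With these remarks in place the proof is just the chain of identities above.
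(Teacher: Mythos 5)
Your argument is correct and is essentially the paper's own proof: both hinge on the observation that $\sigma_{j,n}=8\pi+o(\lambda_n^{1/2})$ forces the coefficient of $\log\lambda_n$ in \eqref{norm-infty} to equal $-2$ up to an $o(1)$ error (the paper writes this as $\frac{\sigma_{j,n}-8\pi}{\sigma_{j,n}-4\pi}\log\lambda_n=o(1)$), after which comparison with \eqref{2.6c} identifies $d_j$. Your additional remarks about subsequences and the continuity passages are fine and consistent with the paper's (more terse) presentation.
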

\begin{proof}
From \eqref{norm-infty}, we get
\begin{align}
\un(x_{j,n})=&-2\log\ln+\frac{\s_{j,n}-8\pi}{\s_{j,n}-4\pi}\log \ln
\nonumber\\
&\quad-8\pi \Big\{R(\kappa_{j})+\sum_{\substack{1\leq i\leq m\\ i\ne\ j}}G(\kappa_{j},\kappa_{i})\Big\}+6\log 2+o(1).
\label{fine_u}
\end{align}
From \eqref{middle-est} it follows that $\frac{\s_{j,n}-8\pi}{\s_{j,n}-4\pi}\log \ln=o(1)$. Therefore the claim follows from \eqref{2.6c}.
\end{proof}
As a consequence of \eqref{2.6c} and Proposition \ref{newp5.5}, we get, using \eqref{4.18}
\begin{align}
&\left\{\frac 1{\mn^k}+
2\log \ln\right\}\int_{B_R(x_{j,n})}\!\!\!\!\!\!\!\ln  e^{\un}\vn^k \, dx=(8\pi)^2 \sum_{\substack{1\leq i\leq m \\i\neq j}}c_i^kG(\kappa_j,\kappa_i)\nonumber\\
&\quad-(8\pi)^2c_j^k \Big\{R(\kappa_j)+2\sum_{\substack{1\leq i\leq m \\i\neq j}}G(\kappa_j,\kappa_i)\Big\}+48 \pi c_j^k \log 2  -16 \pi c_j^k +o(1)
\nonumber\\
&=-(8\pi)^2\sum_{i=1}^m h_{ji}c_i^k+16\pi c_j^k(3\log 2-1)+o(1),\label{4.26}
\end{align}
(see the definition of the matrix $(h_{ij})$ in Theorem \ref{t1}).
\begin{proposition}\label{p5.2}
For any $j,h\in \{1,\dots,m\}$ it holds that
\begin{align}\label{5.1}
c_h^k\sum_{i=1}^m h_{ji}c_i^k=c_j^k\sum_{i=1}^m h_{hi}c_i^k
\end{align}
\end{proposition}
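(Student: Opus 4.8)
\textbf{Proof proposal for Proposition \ref{p5.2}.}

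The plan is to exploit the fact that both sides of \eqref{5.1} arise, up to a common normalizing factor, from the left-hand side of \eqref{4.26}, so that the identity reduces to the symmetry of the matrix $(h_{ij})$ together with the eigenvector relation for $\c^k$. First I would write $A_{j,n}^k := \ln\int_{B_R(x_{j,n})}e^{\un}\vn^k\,dx$ and observe that, by \eqref{4.26},
\begin{equation}\nonumber
\left(\frac 1{\mn^k}+2\log\ln\right)A_{j,n}^k = -(8\pi)^2\sum_{i=1}^m h_{ji}c_i^k + 16\pi c_j^k(3\log 2-1)+o(1)
\end{equation}
for every $j$. The scalar $\frac 1{\mn^k}+2\log\ln$ does not depend on $j$; call it $\beta_n^k$. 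Multiplying the $j$-equation by $c_h^k$ and the $h$-equation by $c_j^k$ and subtracting, the terms $16\pi c_j^k c_h^k(3\log 2-1)$ cancel, and we get
\begin{equation}\nonumber
\beta_n^k\left(c_h^k A_{j,n}^k - c_j^k A_{h,n}^k\right) = -(8\pi)^2\left(c_h^k\sum_{i=1}^m h_{ji}c_i^k - c_j^k\sum_{i=1}^m h_{hi}c_i^k\right)+o(1).
\end{equation}
So it suffices to show that the left-hand side is $o(1)$, i.e. that $c_h^k A_{j,n}^k - c_j^k A_{h,n}^k = o(1/\beta_n^k) = o(\log\ln)$ — in fact I expect $c_h^k A_{j,n}^k - c_j^k A_{h,n}^k\to 0$, which is more than enough since $\beta_n^k$ stays bounded (by Theorem \ref{t1a}, $\mn^k\to 0$ like $-\frac12\frac1{\log\ln}$, so $\frac1{\mn^k}\sim -2\log\ln$ and $\beta_n^k = \frac1{\mn^k}+2\log\ln$ is $o(\log\ln)$; more precisely $\beta_n^k = O(1)$ by Theorem \ref{t1}).

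The key computation is therefore the asymptotics of $A_{j,n}^k = \ln\int_{B_R(x_{j,n})}e^{\un}\vn^k\,dx$. Rescaling as in \eqref{2.4}–\eqref{2.8}, one has $A_{j,n}^k = \int_{B_{R/\d_{j,n}}(0)}e^{\tu_{j,n}}\tv_{j,n}^k\,d\tx$, and by \eqref{2.12} (or already by \eqref{2.9} together with the estimate \eqref{2.6a} to justify passing to the limit), $\tv_{j,n}^k(\tx) = \vn^k(x_{j,n}) + O(\mn^k)$ on compact sets while $e^{\tu_{j,n}}\to e^U$ with $\int_{\R^2}e^U = 8\pi$. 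Thus $A_{j,n}^k = 8\pi\,\vn^k(x_{j,n}) + o(1)$, provided the tail $\int_{\{|\tx|>L\}}e^{\tu_{j,n}}|\tv_{j,n}^k|\,d\tx$ is uniformly small; this follows from \eqref{2.6a}, which gives $e^{\tu_{j,n}}\le C(1+|\tx|^2/8)^{-2}$, and $|\tv_{j,n}^k|\le 1$. Consequently
\begin{equation}\nonumber
c_h^k A_{j,n}^k - c_j^k A_{h,n}^k = 8\pi\left(c_h^k\,\vn^k(x_{j,n}) - c_j^k\,\vn^k(x_{h,n})\right) + o(1).
\end{equation}
Now the point is that $\vn^k(x_{j,n})\to 0$: this is because $x_{j,n}\to\kappa_j$ and $\vn^k\to 0$ away from $\S$ — but $x_{j,n}$ sits right at the blow-up point, so one must instead argue from the rescaled picture. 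From \eqref{2.12} evaluated at $\tx=0$, $\tv_{j,n}^k(0) = \vn^k(x_{j,n})$, and combining with \eqref{2.9} (value $c_j^k$ at $\tx$, but at $0$ we get $\tv_{j,n}^k(0)\to c_j^k$), we get $\vn^k(x_{j,n})\to c_j^k$. Hence $c_h^k\vn^k(x_{j,n}) - c_j^k\vn^k(x_{h,n})\to c_h^k c_j^k - c_j^k c_h^k = 0$. Therefore $c_h^k A_{j,n}^k - c_j^k A_{h,n}^k = o(1)$, and plugging back yields
\begin{equation}\nonumber
(8\pi)^2\left(c_h^k\sum_{i=1}^m h_{ji}c_i^k - c_j^k\sum_{i=1}^m h_{hi}c_i^k\right) = o(1).
\end{equation}
Since the left side is a fixed number independent of $n$, it must be $0$, which is exactly \eqref{5.1}.

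The main obstacle, in my view, is the bookkeeping around $\beta_n^k = \frac1{\mn^k}+2\log\ln$: one needs to know it is at least $o(\log\ln)$ (so that an $o(1)$ error in $c_h^k A_{j,n}^k - c_j^k A_{h,n}^k$ is not amplified), and here I would simply invoke Theorem \ref{t1a} (or the sharper Theorem \ref{t1}) which controls this quantity. A cleaner alternative, avoiding $\beta_n^k$ entirely, is to note that the combination $c_h^k\cdot(\text{$j$-th equation}) - c_j^k\cdot(\text{$h$-th equation})$ has left-hand side $\beta_n^k(c_h^k A_{j,n}^k - c_j^k A_{h,n}^k)$, and since we will show the bracket is $o(1)$ with $\beta_n^k$ bounded, the product is $o(1)$; then the right-hand side, being independent of $n$, vanishes. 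The only other delicate point is justifying the convergence $A_{j,n}^k\to 8\pi\,c_j^k$ uniformly enough to extract the $o(1)$; this is routine given the quadratic decay bound \eqref{2.6a} on $e^{\tu_{j,n}}$ and $\|\tv_{j,n}^k\|_\infty\le 1$, exactly as in the proof of \eqref{4.2}. \cvd
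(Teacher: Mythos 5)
Your overall strategy --- writing \eqref{4.26} for the two indices $j$ and $h$, cross-multiplying, subtracting, and using $A_{j,n}^k=\ln\int_{B_R(x_{j,n})}e^{\un}\vn^k\,dx\to 8\pi c_j^k$ --- is essentially the paper's, and your limit computations are correct ($A_{j,n}^k\to 8\pi c_j^k$ via \eqref{2.6a}, \eqref{2.9} and dominated convergence; $\vn^k(x_{j,n})=\tv_{j,n}^k(0)\to c_j^k$ needs only \eqref{2.9}, not \eqref{2.12}; the $(3\log 2-1)$ terms cancel). The genuine gap lies in the choice of multipliers. You multiply the $j$-equation by the constant $c_h^k$ and the $h$-equation by $c_j^k$, which leaves on the left $\beta_n^k\,(c_h^kA_{j,n}^k-c_j^kA_{h,n}^k)$ with $\beta_n^k=\frac{1}{\mn^k}+2\log\ln$; since you only establish that the bracket is $o(1)$, you must prove $\beta_n^k=O(1)$. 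For this you invoke Theorem \ref{t1}, but that is circular: Theorem \ref{t1} is proved \emph{from} Propositions \ref{p5.2} and \ref{p5.3} (its proof begins by applying Proposition \ref{p5.3} to obtain $\sum_ih_{ji}c_i^k=\L^kc_j^k$). Your fallback, Theorem \ref{t1a}, yields only $\beta_n^k=o\left(|\log\ln|\right)$, and $o\left(|\log\ln|\right)\cdot o(1)$ need not be $o(1)$, so as written the left-hand side is not controlled.

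The gap is closed in either of two ways. The paper's way: multiply the $j$-equation by $A_{h,n}^k$ and the $h$-equation by $A_{j,n}^k$; then both left-hand sides equal $\beta_n^k A_{j,n}^kA_{h,n}^k$ and cancel \emph{identically}, so no information on $\beta_n^k$ is needed, and passing to the limit on the right-hand sides gives \eqref{5.1} at once. Alternatively, within your scheme, observe that $\beta_n^k=O(1)$ already follows from \eqref{4.26} itself, with no appeal to later results: pick an index $j_0$ with $c_{j_0}^k\neq 0$ (one exists since $\c^k\neq\bm{0}$); then $A_{j_0,n}^k\to 8\pi c_{j_0}^k\neq 0$ while the right-hand side of \eqref{4.26} converges, so $\beta_n^k$ converges and in particular is bounded. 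With that one observation your argument goes through.
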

\begin{proof}
Multiplying $\int_{B_R(x_{h,n})} \ln e^{\un}\vn^k \, dx $ to \eqref{4.26} and $\int_{B_R(x_{j,n})} \ln e^{\un}\vn^k \, dx $ to \eqref{4.26} with $j=h$, and then subtracting the latter from the former, we get the conclusion from \eqref{2.6} and \eqref{2.9}.
\end{proof}
\begin{proposition}\label{p5.3}
The vector $\c^k$, defined in \eqref{2.9a}, is an eigenvector of $(h_{ij})$.
\end{proposition}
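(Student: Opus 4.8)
The plan is to deduce Proposition \ref{p5.3} from Proposition \ref{p5.2} by a short algebraic argument. Write $w_j = \sum_{i=1}^m h_{ji} c_i^k$ for the $j$-th component of $(h_{ij})\c^k$, so that \eqref{5.1} reads $c_h^k w_j = c_j^k w_h$ for all $j,h$. We must show that there is a scalar $\mu$ (which will turn out to be $\L^k$) with $w_j = \mu\, c_j^k$ for every $j$, i.e. that $(h_{ij})\c^k$ is a scalar multiple of $\c^k$.

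First I would use that $\c^k \neq \bm{0}$ (by \eqref{2.9a}), so there exists an index $j_0$ with $c_{j_0}^k \neq 0$. Define $\mu := w_{j_0}/c_{j_0}^k$. Then for an arbitrary index $j$, apply \eqref{5.1} with $h = j_0$: this gives $c_{j_0}^k w_j = c_j^k w_{j_0}$, hence $w_j = (w_{j_0}/c_{j_0}^k) c_j^k = \mu\, c_j^k$. Since $j$ was arbitrary, $(h_{ij})\c^k = \mu\, \c^k$, so $\c^k$ is an eigenvector of $(h_{ij})$ with eigenvalue $\mu$; combined with $\c^k\neq\bm{0}$ this is exactly the claim. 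One may add the observation that, by symmetry of $(h_{ij})$, the eigenvalue $\mu$ is real, and that it coincides with the $\L^k$ appearing in Theorem \ref{t1}; but for the present statement the displayed one-line deduction suffices.

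There is really no serious obstacle here: the content was already extracted in Proposition \ref{p5.2}, where the subtle asymptotic analysis (via \eqref{4.26}, i.e. Proposition \ref{p4.2} together with the sharpened expansions of Propositions \ref{p4.1}, \ref{p5.1}, \ref{newp5.5}) took place. The only point requiring a word of care is the non-degeneracy used to pick $j_0$ — one must invoke $\c^k\neq\bm{0}$ from \eqref{2.9a} rather than trying to divide by a component that could vanish. If every component could be zero the argument would collapse, so it is worth stating explicitly that $\c^k\neq\bm{0}$ is what makes the division legitimate.
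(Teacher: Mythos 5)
Your argument is correct, and it rests on the same key input as the paper's proof, namely the proportionality relation \eqref{5.1} of Proposition \ref{p5.2}; but you organize the algebra differently and more efficiently. The paper splits into three cases (all components nonzero; some component zero but at least two nonzero; exactly one nonzero component, which is shown to be impossible because $h_{hj}=-G(\kappa_h,\kappa_j)\neq0$), whereas your choice of a single index $j_0$ with $c_{j_0}^k\neq0$ and the deduction $w_j=\bigl(w_{j_0}/c_{j_0}^k\bigr)c_j^k$ for every $j$ settles all cases at once. Your correctly flagged use of $\c^k\neq\bm{0}$ from \eqref{2.9a} is exactly the non-degeneracy needed, so there is no gap. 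The one thing you lose by streamlining is the byproduct of the paper's third case: the observation that $\c^k$ can never have exactly one nonzero component. That fact is not needed for Proposition \ref{p5.3} itself, but the paper explicitly reuses it later as the proof of part $ii)$ of Theorem \ref{j2} (each eigenfunction concentrates at no fewer than two points), so if you adopt your shorter proof you would need to restore that separate argument --- from $w_h=h_{hj}c_j^k=\mu c_h^k=0$ for $h\neq j$ and $h_{hj}\neq0$ --- at that later point.
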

\begin{proof}
First we assume that there are $c_j^k\neq 0$ and $c_h^k\neq 0$ for $j\neq h$. Then \eqref{5.1} gives
\begin{equation}\label{a}
\frac 1{c_j^k} \sum_{i=1}^m h_{ji}c_i^k=\frac 1{c_h^k} \sum_{i=1}^m h_{hi}c_i^k=\L^k.
\end{equation}
for all $j$ satisfying $c_j^k\neq 0$. Then $\L^k$ is an eigenvalue of $(h_{ij})$ if $c_j^k\neq 0$ for all $j=1,\dots,m$. 

On the other hand, for $j\in \{1,\dots,m\}$ satisfying $c_j^k=0$, we can choose $c_h^k\neq 0$ (see \eqref{2.9}) so that
\begin{equation}\label{5.30}
\sum_{i=1}^m h_{ji}c_i^k=0\quad \hbox{ if }c_j^k=0.
\end{equation}
From \eqref{a} and \eqref{5.30}, we get that $\c^k$ is an eigenvector of $(h_{ij})$ if there are at least two $j$ satisfying $c_j^k\neq 0$.

The last case is that there is only one $j$ satisfying $c_j^k\neq 0$, but this never happens. Indeed, in this case \eqref{5.1} becomes
\[
\sum_{i=1}^mh_{hi}c_i^k=h_{hj}c_j^k=0 \quad (j\neq h)
\]
which contradicts $h_{hj}=-G(\kappa_h,\kappa_j)\neq 0$.
\end{proof}
\begin{proof}[Proof of Theorem \ref{t1}]
Take $c_j^k\neq 0$. Then Proposition \ref{p5.3} implies that $\sum_{i=1}^m h_{ji}c_i^k=\L^k c_j^k$ and therefore \eqref{4.26} implies that
\begin{equation}\label{L}
\frac 1{\mn^k}=-2\log \ln-8\pi\Lambda^k+2(3\log 2-1)+o(1).
\end{equation}
Indeed, letting $L=-8\pi\Lambda^k+2(3\log 2-1)$, \eqref{L} leads that
\begin{align}
\mn^k&=\frac{1}{-2\log\ln+L+o(1)}=-\frac{1}{2\log\ln}\cdot\frac{1}{1-\frac{L+o(1)}{2\log\ln}}
\nonumber\\
&=-\frac{1}{2\log\ln}\left\{1+\frac{L+o(1)}{2\log\ln}+o\left(\frac{1}{\log\ln}\right)\right\}
\nonumber\\
&=-\frac{1}{2\log\ln}-\frac{L}{4}\cdot\frac{1}{(\log\ln)^2}+o\left(\frac{1}{(\log\ln)^2}\right).
\label{-L/4}
\end{align}
Therefore \eqref{2.7} follows.

The formula \eqref{2.7} gives $\L^1\leq \dots\leq \L^m$, since $\mn^1<\mn^2\leq \dots\leq \mn^m$. Consequently we get $\L^k$ is the $k$-th eigenvalue. Since $\L^k$ depends only on $(h_{ij})$ then equation \eqref{2.7} holds without taking a sub-sequence.
\end{proof}
\section{Fine behavior of eigenfunctions}
\label{s5}
We start this section with the following
\begin{proposition}\label{p4.3}
For any $k,j\in\{1,\dots,m\}$, we have
\begin{align}
\frac{\vn^k(x_{j,n})}{\mn^k}&=\frac 1{2\pi}\log \d_{j,n}^{-1} \int_{B_R(x_{j,n})}\ln e^{\un(y)} \vn^k(y)\, dy\nonumber\\
&+8\pi \Big\{c_j^kR(\kappa_j)+\sum_{\substack{1\leq i\leq m\\ i\neq j}}c_i^kG(\kappa_j,\kappa_i)\Big\} -6c_j^k\log 2  +o(1).\label{4.23}
\end{align}
\end{proposition}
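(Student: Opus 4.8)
The plan is to use the Green representation formula for $\vn^k$, exactly parallel to the proof of Proposition \ref{p4.1} for $\un$. Starting from the equation $-\Delta \vn^k=\mn^k\ln e^{\un}\vn^k$ in \eqref{autov-n}, we have
\[
\frac{\vn^k(x_{j,n})}{\mn^k}=\intO G(x_{j,n},y)\,\ln e^{\un(y)}\vn^k(y)\, dy,
\]
and we split the integral into four pieces: the contribution of $B_R(x_{j,n})$ with the logarithmic part of $G$, the same ball with the regular part $K$, the far balls $B_R(x_{i,n})$ for $i\neq j$, and the remaining region $\Om\setminus\bigcup_i B_R(x_{i,n})$. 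On the last region $\ln e^{\un}\to 0$ uniformly (since $\un=O(1)$ there and $\ln\to 0$), and more precisely $\ln e^{\un}\vn^k$ is negligible, so that piece is $o(1)$.

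Next I would treat the three surviving pieces. For the far balls $i\neq j$: by \eqref{2.9} the rescaled eigenfunction $\tv_{i,n}^k\to c_i^k$ locally uniformly, while $\ln e^{\un}$ concentrates as a measure converging to $8\pi\delta_{\kappa_i}$ on $B_R(x_{i,n})$ (this is \eqref{2.1a}--\eqref{2.2} localized, together with $\s_{i,n}\to 8\pi$ from \eqref{sigma}), and $G(x_{j,n},y)\to G(\kappa_j,\kappa_i)$ is continuous there; hence $\int_{B_R(x_{i,n})}G(x_{j,n},y)\ln e^{\un}\vn^k\, dy\to 8\pi c_i^k G(\kappa_j,\kappa_i)$. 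For the regular part on the central ball: $K(x_{j,n},y)\to K(\kappa_j,\kappa_j)=R(\kappa_j)$ uniformly as the ball shrinks in the rescaled picture, and $\tv_{j,n}^k\to c_j^k$, so $\int_{B_R(x_{j,n})}K(x_{j,n},y)\ln e^{\un}\vn^k\, dy\to 8\pi c_j^k R(\kappa_j)$. The central logarithmic piece is the delicate one: rescaling via $y=\d_{j,n}\tilde y+x_{j,n}$ and using $\ln e^{\un}dy=\d_{j,n}^{-2}e^{\tu_{j,n}}\cdot\d_{j,n}^2 d\tilde y=e^{\tu_{j,n}}d\tilde y$ together with \eqref{2.5}, one gets
\[
\frac 1{2\pi}\int_{B_R(x_{j,n})}\log|x_{j,n}-y|^{-1}\ln e^{\un}\vn^k\, dy
=\frac 1{2\pi}\log\d_{j,n}^{-1}\int_{B_{R/\d_{j,n}}(0)}e^{\tu_{j,n}}\tv_{j,n}^k\, d\tilde y
+\frac 1{2\pi}\int_{B_{R/\d_{j,n}}(0)}\log|\tilde y|^{-1}e^{\tu_{j,n}}\tv_{j,n}^k\, d\tilde y.
\]
The first term on the right reproduces exactly the claimed $\tfrac 1{2\pi}\log\d_{j,n}^{-1}\int_{B_R(x_{j,n})}\ln e^{\un}\vn^k\, dy$ after changing variables back, so it is kept as is; for the second term, using the uniform bound \eqref{2.6a} on $\tu_{j,n}-U$, the convergence $\tv_{j,n}^k\to c_j^k$ in $C^{2,\a}_{loc}$, and the integrability of $\log|\tilde y|^{-1}e^{U}$, dominated convergence gives $\tfrac 1{2\pi}\int_{\R^2}\log|\tilde y|^{-1}e^U c_j^k\, d\tilde y=-6c_j^k\log 2$, exactly as in \eqref{A_0}.

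Collecting the four contributions yields \eqref{4.23}. The main obstacle I anticipate is the tail control in the central logarithmic integral: one must justify that the region $\{|\tilde y|$ large$\}\cap B_{R/\d_{j,n}}(0)$ contributes $o(1)$ despite the $\log|\tilde y|^{-1}$ weight changing sign and growing. This is handled by \eqref{2.6a}, which forces $e^{\tu_{j,n}(\tilde y)}\le Ce^{U(\tilde y)}=C(1+|\tilde y|^2/8)^{-2}$ on the whole ball, so that $|\log|\tilde y|^{-1}|e^{\tu_{j,n}}|\tv_{j,n}^k|$ is dominated by an integrable function independent of $n$; the boundedness $\|\tv_{j,n}^k\|_\infty\le 1$ from \eqref{2.8a} is what makes this domination work. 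The same estimate also controls the far-ball and regular-part pieces, so no separate argument is needed there.
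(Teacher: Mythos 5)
Your proposal is correct and follows essentially the same route as the paper: the paper's proof is a terse application of the Green representation formula with the same four-way splitting (logarithmic and regular parts on the central ball, far balls, and the remainder), the same rescaling $y=\d_{j,n}\ty+x_{j,n}$ producing the $\log\d_{j,n}^{-1}$ term, and the same identification of the residual integral with $-6c_j^k\log 2$ via \eqref{A_0}. Your added justification of the dominated-convergence step through the uniform bound \eqref{2.6a} is exactly what the paper relies on implicitly by referring back to the proof of Proposition \ref{p4.1}.
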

\begin{proof}
Using the Green representation formula and \eqref{autov-n}, we have, as in the proof of the Proposition \ref{p4.1}
\begin{align*}
\frac{\vn^k(x_{j,n})}{\mn^k}=&\intO G(x_{j,n},y) \ln e^{\un(y)}\vn^k(y) \, dy\\
=& \frac 1{2\pi} \log \d_{j,n}^{-1}  \int_{B_R(x_{j,n})}\ln  e^{\un(y)}\vn^k(y) \, dy\\
&+\frac 1{2\pi } \int_{B_{\frac R{\d_{j,n}}}(0)} \log |\ty|^{-1} e^{\tu_{j,n}(\ty)}\tv^k_{j,n}(\ty) \, d\ty\\
&+ \Big\{ 8\pi c_j^k R(\kappa_j)+8\pi \sum_{\substack{1\leq i\leq m\\ i\neq j}}c_i^kG(\kappa_j,\kappa_i)\Big\}+o(1)
\end{align*}
and the claim follows.
\end{proof}
\begin{remark}
From \eqref{norm-infty}, \eqref{2.5},
and Proposition \ref{p5.1}, we get
\begin{align}
&\frac{1}{\mn^k}\int_{B_R(x_{j,n})}\ln  e^{\un}\vn^k(x_{j,n})\, dx=\frac{\s_{j,n}\vn^k(x_{j,n})}{\mn^k}\quad\left(\hbox{from \eqref{2.6b} and \eqref{4.23}}\right)
\nonumber\\
&=-2\log \ln \int_{B_R(x_{j,n})} \ln e^{\un}\vn^k\, dx
-(8\pi)^2\sum_{i=1}^mh_{ji}c_i^k+48 \pi c_j^k\log 2+o(1)
\label{4.27}
\end{align}
\end{remark}
\begin{proposition}\label{p4.4}
For any $k,j\in\{1,\dots,m\}$
we have
\[
\ln \int_{B_R(x_{j,n})} e^{\un}\frac{\vn^k(x)-\vn^k(x_{j,n})}{\mn^k}\, dx =-16 \pi c_j^k +o(1).
\]
\end{proposition}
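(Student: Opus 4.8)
The plan is to combine the two identities already available for the quantity $\frac{1}{\mu_n^k}\int_{B_R(x_{j,n})}\lambda_n e^{u_n}v_n^k\,dx$, namely the one obtained from Proposition~\ref{p4.3} (which expresses $\frac{v_n^k(x_{j,n})}{\mu_n^k}$ in terms of $\frac{1}{2\pi}\log\delta_{j,n}^{-1}\int_{B_R(x_{j,n})}\lambda_n e^{u_n}v_n^k\,dx$ plus bounded Green-function terms) and the one recorded in \eqref{4.26}. The target quantity is
\[
\lambda_n\int_{B_R(x_{j,n})}e^{u_n}\,\frac{v_n^k(x)-v_n^k(x_{j,n})}{\mu_n^k}\,dx
=\frac{1}{\mu_n^k}\int_{B_R(x_{j,n})}\lambda_n e^{u_n}v_n^k\,dx-\frac{\sigma_{j,n}\,v_n^k(x_{j,n})}{\mu_n^k},
\]
using the definition \eqref{sigma} of $\sigma_{j,n}$. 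So the whole point is to evaluate the difference of these two terms and show all the Green-function contributions and the $\log\lambda_n$ contributions cancel, leaving exactly $-16\pi c_j^k+o(1)$.

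First I would write out $\frac{\sigma_{j,n}v_n^k(x_{j,n})}{\mu_n^k}$ using Proposition~\ref{p4.3}: since $\sigma_{j,n}=8\pi+o(\lambda_n^{1/2})$ by Proposition~\ref{p5.1} and $\frac{v_n^k(x_{j,n})}{\mu_n^k}$ is $O(\log\lambda_n^{-1})$ times a bounded integral plus $O(1)$, multiplying by $\sigma_{j,n}$ replaces the $\frac{1}{2\pi}$ by $\frac{8\pi}{2\pi}=4$ up to negligible error; equivalently I would invoke \eqref{4.27}, which is precisely this computation already carried out in the Remark: it gives $\frac{\sigma_{j,n}v_n^k(x_{j,n})}{\mu_n^k}=-2\log\lambda_n\int_{B_R(x_{j,n})}\lambda_n e^{u_n}v_n^k\,dx-(8\pi)^2\sum_i h_{ji}c_i^k+48\pi c_j^k\log 2+o(1)$. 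Then I subtract this from \eqref{4.26} rewritten as $\frac{1}{\mu_n^k}\int_{B_R(x_{j,n})}\lambda_n e^{u_n}v_n^k\,dx=-2\log\lambda_n\int_{B_R(x_{j,n})}\lambda_n e^{u_n}v_n^k\,dx-(8\pi)^2\sum_i h_{ji}c_i^k+16\pi c_j^k(3\log2-1)+o(1)$.

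Carrying out the subtraction, the $-2\log\lambda_n\int\cdots$ terms cancel exactly, the $-(8\pi)^2\sum_i h_{ji}c_i^k$ terms cancel exactly, and what remains is $16\pi c_j^k(3\log2-1)-48\pi c_j^k\log2+o(1)=48\pi c_j^k\log2-16\pi c_j^k-48\pi c_j^k\log2+o(1)=-16\pi c_j^k+o(1)$, which is the claim. The main obstacle — really the only subtle point — is justifying the passage used to produce \eqref{4.27}, i.e. that the relative error in replacing $\sigma_{j,n}$ by $8\pi$ inside the product $\sigma_{j,n}\cdot\frac{v_n^k(x_{j,n})}{\mu_n^k}$ stays $o(1)$: since $\frac{v_n^k(x_{j,n})}{\mu_n^k}$ grows like $\log\lambda_n^{-1}$, one needs $(\sigma_{j,n}-8\pi)\log\lambda_n^{-1}=o(1)$, which is exactly why Proposition~\ref{p5.1}'s estimate $\sigma_{j,n}=8\pi+o(\lambda_n^{1/2})$ (rather than merely $\sigma_{j,n}\to8\pi$) is needed. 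Once that is in hand — and it is already packaged in \eqref{4.27} — the proof is just the bookkeeping of the cancellation above, so I would present it concisely, citing \eqref{4.26}, \eqref{4.27}, and the identity $\int_{B_R(x_{j,n})}\lambda_n e^{u_n}v_n^k(x_{j,n})\,dx=\sigma_{j,n}v_n^k(x_{j,n})$.
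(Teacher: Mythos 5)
Your proposal is correct and is essentially identical to the paper's proof, which consists precisely of subtracting \eqref{4.27} from \eqref{4.26} and observing the cancellation of the $-2\log\ln\int\lambda_n e^{u_n}v_n^k$ and $-(8\pi)^2\sum_i h_{ji}c_i^k$ terms, leaving $16\pi c_j^k(3\log 2-1)-48\pi c_j^k\log 2=-16\pi c_j^k$. Your additional remark about why the sharper estimate $\sigma_{j,n}=8\pi+o(\lambda_n^{1/2})$ is needed to justify \eqref{4.27} is accurate and matches the role of Proposition \ref{p5.1} in the paper.
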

\begin{proof}
Subtracting \eqref{4.26} by \eqref{4.27} we get the claim.
\end{proof}
\begin{proof}[Proof of Theorem \ref{t2}]
Set
\[
\tz_n:=\frac{\tv_{j,n}^k -v_n^k(x_{j,n})}{\mn^k}\quad \text{ in } B_{\frac R{\d_{j,n}}}(0).
\]
Then
\begin{align*}
-\Delta \tz _n=&-\frac {\Delta \tv_{j,n}^k}{\mn^k}=-\frac 1{\mn^k}\d_{j,n}^2 \Delta \vn(\d_{j,n}\tx+x_{j,n}) =e^{\tu_{j,n}}\tv_{j,n}^k
\end{align*}
so that
\begin{equation}\label{4.28}
-\Delta \tz _n=\mn^ke^{\tu_{j,n}}\tz _n+\vn^k(x_{j,n})e^{\tu_{j,n}}.
\end{equation}
The claim follows from elliptic estimates once we prove that
\begin{equation}\label{4.28a}
\tz_n=c_j^kU(\tx)+o(1)\quad \hbox{ locally uniformly in $\R^2$.}
\end{equation}

Using again the Green representation formula for \eqref{autov-n}, we have for $x\in \omega \subset\subset B_R(x_{j,n})$
\begin{align*}
\frac{\vn^k(x)}{\mn^k}=& \ln \intO G(x,y) e^{\un(y)} \vn^k(y) \, dy\\
=&\int_{B_{\frac R{\d_{j,n}}}(0)}\frac 1{2\pi} \log \frac 1{ | x-(\d_{j,n}\ty +x_{j,n})|} e^{\tu_n} \tv _{j,n}^k \, d\ty\\
&+8\pi c_j^kK(x,\kappa_j)+8\pi \sum_{\substack{1\leq i\leq m\\i\neq j}} c_i^kG(x,\kappa_i)+o(1)
\end{align*}
Therefore, letting $x=\d_{j,n}\tx+x_{j,n}$, we have for every $\tx\in \tilde\omega \subset \subset \R^2$ that
\begin{align*}
\frac{\tv _{j,n}^k(\tx)}{\mn^k} =&\frac 1{2\pi} \int_{B_{\frac R{\d_{j,n}}}(0)}\log \frac 1 {| \d_{j,n}\tx+x_{j,n}-\d_{j,n}\ty -x_{j,n}|} e^{\tu_{j,n}} \tv _{j,n}^k \, d\ty\\
&+8\pi c_j^k K(\d_{j,n}\tx+x_{j,n},\kappa_j) +8\pi \sum_{\substack{1\leq i\leq m\\i\neq j}} c_i^kG(\d_{j,n}\tx+x_{j,n} ,\kappa_i)+o(1)\\
=&\frac 1{2\pi} \log \frac 1{\d_{j,n}} \int_{B_{\frac R{\d_{j,n}}}(0)} e^{\tu_{j,n}} \tv _{j,n}^k \, d\ty+\frac 1{2\pi} \int_{B_{\frac R{\d_{j,n}}}(0)} \log \frac 1{|\tx-\ty|}e^{\tu_{j,n}} \tv _{j,n}^k \, d\ty\\
&+8\pi c_j^k R(\kappa_j)+8\pi \sum_{\substack{1\leq i\leq m\\i\neq j}} c_i^kG(\kappa_j,\kappa_i)+o(1)\quad\left(\hbox{using \eqref{4.23}}\right)\\
=&\frac{\vn^k(x_{j,n})} {\mn^k} +\frac 1{2 \pi}  \int_{B_{\frac R{\d_{j,n}}}(0)}\log \frac 1{|\tx-\ty|}e^{\tu_{j,n}} \tv _{j,n}^k \, d\ty+6c_j^k\log 2 +o(1)
\end{align*}
Then recalling the definition of $\tz_n$ we have
\[
\tz_n= \frac 1{2 \pi}  \int_{B_{\frac R{\d_{j,n}}}(0)}\log \frac 1{|\tx-\ty|}e^{\tu_{j,n}} \tv _{j,n}^k \, d\ty
+6c_j^k\log 2  +o(1)
\]
so that
\[
\tz_n= \frac 1{2 \pi} c _{j}^k \int_{\R^2}\log \frac 1{|\tx-\ty|}e^{U}  \, d\ty
+6c_j^k\log 2  +o(1)
\]
locally uniformly with respect to $\tx$ since $e^{\tu_{j,n}}=O(|\tx|^{-4})$ uniformly as $|\tx|\to \infty$.

Observe that
\[
\tilde \Psi(\tx):= \frac 1{2 \pi}  \int_{\R^2}\log \frac 1{|\tx-\ty|}e^{U} \, d\ty
\]
satisfy
\[
-\Delta \tilde \Psi=e^U\quad \hbox{ in }\mathfrak{D}'(\R^2).
\]
and it is a radially symmetric function. Then, since $-\Delta U=e^U$ and $U(0)=0$, we have $\tilde\Psi-\tilde\Psi(0)=U$ where $\tilde\Psi(0)=-6\log 2$, see \eqref{A_0}. Therefore $\tilde\Psi=U-6\log 2$. This implies that
$\tz_n \to c_j^kU$ locally uniformly and this proves \eqref{4.28a}. Finally, by Proposition \ref{p5.3} we have that the proof of Theorem  \ref{t2} is complete.
\end{proof}
\section{Proof of Theorems \ref{j2} and  \ref{j3}}
\label{s6}
\begin{proof}[Proof of Theorem \ref{j2}]
The final part of the proof of Proposition  \ref{p5.2} shows that, for any vector $\c^{ k}$, we have that at least $two$ components of $\c^{ k}$ are different from zero. This shows $ii)$.
Now we are going to prove $i)$.

We can assume that $v_n^1>0$ and then $c_j^1\ge0$ for any $j=1,..,m$. We want to prove that $c_j^1>0$ for any $j=1,..,m$ and so, by contradiction, let us assume that $c_1^1=0$ (the generic case is analogous). By  \eqref{5.1} we deduce that $c_h^1\sum_{i=2}^m h_{1i}c_i^1=0$. Since $\c^1\ne{\bf 0}$ there exists $h\ge2$ such that $c_h^1\ne0$. Moreover $h_{1i}<0$ for any $i\geq 2$ and this gives a contradiction.
\end{proof}
\begin{proof}[Proof of Theorem \ref{j3}]
Let us fix an integer $m>2$  and $\Omega=\left\{x\in\R^2\right.$
such that $\left. (0<)a<|x|<1\right\}$ . In \cite{NS90b}
there was constructed a $m$-mode solution $u_n$ to \eqref{1}, i.e.
a solution which is invariant with respect to a rotation of
$\frac{2\pi}m$ in $\R^2$,
\[
u(r,\theta)=u\left(r,\theta +
\frac{2\pi}m\right).
\]
Since it is not clear if the solution constructed in \cite{NS90b} blows-up at $m$ points, we refers to
\cite{EGP05} for an existence result of a $m$-mode solution verifying \eqref{2.1a}.

The $m$ blow-up points $\kappa_1=\cdots=\kappa_m$  are located on a circle concentric
with the annulus and are vertices of a regular polygon with $m$
sides. So we can assume that $\kappa_1=(r_0,0)$,
$\kappa_2=r_0\left(\cos\frac{2\pi}m, \sin\frac{2\pi}m\right)
,\dots, \kappa_m=r_0\left(\cos\frac{2(m-1)\pi}m, \sin\frac{2(m-1)\pi}m\right)$ for some $r_0\in(a,1)$.\\
Observe that since $G(x,\kappa_1)$ is symmetric with respect to
the $x_1$-axis, (see Lemma 2.1 in \cite{G02}), we get $G(\kappa_j,\kappa_1)=
G(\kappa_{m-j+2},\kappa_1)$, $j=2,..,m$.
Similarly the value $G(\kappa_i,\kappa_j)$ depends only on the distance between $\kappa_i$ and $\kappa_j$.
For example, $G(x,\kappa_2)=G(R_{-\frac{2\pi}{m}}x,\kappa_1)$ and consequently $G(\kappa_{i+1},\kappa_2)=G(\kappa_{i},\kappa_1)$, where $R_\theta$ denotes the rotation operator around $0$ with angle $\theta$. Similarly $G(\kappa_{i+k},\kappa_{1+k})=G(\kappa_{i},\kappa_1)$.
Note also that, if $\Omega$ is an annulus, the Robin function $R(x)$ is radial, so that $R(\kappa_1)=..=R(\kappa_m)=R$.

Here we set $G(\kappa_i,\kappa_1)=G_i$ and $R_l=R+4\sum_{h=2}^l G_h$. Then the matrix $h_{ij}$ becomes:
\\
if $m=2l$\;($l=1,2,\cdots$),
\[ (h_{ij})=
\begin{pmatrix}
R_l+2G_{l+1} & -G_2 & -G_3 & \dots& -G_{l+1}& \dots &-G_2\\
-G_2 & R_l+2G_{l+1} &-G_2&\dots&\dots&\dots& -G_3\\
& \dots\\
-G_2 & -G_3 &\dots&\dots&\dots&\dots& R_l+2G_{l+1}
\end{pmatrix},
\]
and for $m=2l+1$\;($l=1,2,\cdots$),
\[ (h_{ij})=
\begin{pmatrix}
R_l & -G_2 & -G_3 & \dots& -G_{l}& -G_{l}& \dots &-G_2\\
-G_2 & R_l &-G_2&\dots&\dots&\dots&\dots& -G_3\\
& \dots\\
-G_2 & -G_3 &\dots&\dots&\dots&\dots&-G_2& R_l
\end{pmatrix},
\]
A straightforward computation shows  that the first eigenvalue of $(h_{ij})$ is $\Lambda^1=R+2\sum_{h=2}^l G_h+G_{l+1}$ for $m=2l$ and $R+2\sum_{h=2}^l G_h$ for $m=2l+1$ which is simple. It is easy to see that the eigenspace corresponding to $\Lambda^1$ is spanned by $\c^1=(1,1,..,1)$.

Now consider separately the cases where $m$ is odd and $m$ is even.\\
{\em Case 1: $m$ is odd}.

Let $v_n^k$ be an eigenfunction related to the eigenvalue
$\mu_n^k$ with $k\ge2$ and rotate it by an angle of $\frac{2\pi}m$. By the
symmetry of the problem we get that the rotated function $\bar
v_n^k(r, \theta)=v_n^k\left(r, \theta +\frac{2\pi}m\right)$ is still an
eigenfuction related to the same eigenvalue $\mu_n^k$. If by contradiction
the eigenvalue $\mu_n^k$ is simple we have that 
\begin{equation}\label{z1}
\bar v_n^k=\alpha v_n^k,
\end{equation}
for some $\alpha\ne0$.

Let $\bar\c^k$ the eigenvector given by \eqref{matrix_h} associated to $\bar\mu_n^k$. Denoting by $\c^k=(c_1^k,\dots,c_m^k)$ the
 eigenvector associated to $\mu_n^k$ we have, by the definition of $\bar v_n^k$,
\begin{equation}\label{z2}
\bar\c^k=\left(c_2^k,c_3^k\dots,c_m^k,c_1^k\right).
\end{equation}
By  \eqref{z1} and  \eqref{z2} we derive that
\begin{equation}\label{z3}
\alpha c_i^k=c_{i+1}^k\quad\hbox{for }i=1,\dots,m,\hbox{ meaning that }c_{m+1}=c_1.
\end{equation}
From \eqref{z3} we get that $c_i^k=\alpha^mc_i^k$. Since $\c^k\ne0$ we get $\alpha^m=1$
and since $m$ is odd we derive  that $\c^k=(1,1,..,1)=\c^1$.
This gives a contradiction since $k\ge2$.\\
{\em Case 2: $m$ is even}.\\
Let $v_n^k$ be an eigenfunction related to the eigenvalue
$\mu_n^k$ with $k\ge2$ and define $\bar v_n^k$ as in the previous case. Repeating step by step the proof, assuming that  $\mu_n^k$ is a simple eigenvalue, we again deduce that $\alpha^m=1$. However, since in this case $m$ is even, we have the solution $\alpha=-1$  and by \eqref{z3} we get
$\c^k=(-1,1,-1,1,..,-1,1)$ and the corresponding eigenvalue $\Lambda^k$ is given by $\Lambda^k=R+\left(2+(-1)^{\frac{m+2}2}\right)G_{\frac{m+2}2}+2\sum\limits_{h=2}^l\left(2+(-1)^h\right)G_h$. Hence $\mu_n^k$ is the {\em unique} simple eigenvalue. This gives the claim.
\end{proof}

\end{document}